\newtheorem{theorem}{Theorem}[section]
\newtheorem{lemma}[theorem]{Lemma}
\newtheorem{proposition}[theorem]{Proposition}
\newtheorem{corollary}[theorem]{Corollary}
\theoremstyle{definition}
\theoremstyle{remark}
\newtheorem{remark}[theorem]{Remark}
\numberwithin{equation}{section}
\providecommand{\keywords}[1]
{
  \small	
  \textbf{\textit{Keywords---}} #1
}
\begin{document}
\setcounter{page}{1}

\title[Scaling limits for 1d random Schrödinger operators]{More scaling limits for 1d random Schrödinger operators with critically decaying and vanishing potential}

\author[Y.H]{Yi Han}
\address{
  Yi Han:
  \endgraf
  Department of Mathematics, Massachusetts Institute of Technology, Cambridge MA
  \endgraf
  {\it E-mail address} {\rm hanyi16@mit.edu}
  }

\thanks{The research was conducted when the author was supported in part by EPSRC grant EP/W524141/1 and in part by Simons Foundation grant (601948, DJ)
}

\begin{abstract} Consider the random Schrödinger operator $H_n$ defined on $\{0,1,\cdots,n\}\subset\mathbb{Z}$
$$
    (H_n\psi)_\ell=\psi_{\ell-1,n}+\psi_{\ell+1,n}+\sigma\frac{\omega_\ell}{a_{\ell,n}}\psi_{\ell,n},\quad \psi_0=\psi_{n+1}=0,
$$
where $\sigma>0$, $\omega_\ell$ are i.i.d. random variables and $a_{\ell,n}$ typically has order $\sqrt{n}$ for $\ell\in[\epsilon n,(1-\epsilon)n]$ and any $\epsilon>0$. Two important cases: (a) the vanishing case $a_{\ell,n}=\sqrt{n}$ and (b) the decaying case $a_{\ell,n}=\sqrt{\ell}$, were studied before in \cite{kritchevski2011scaling}. In this paper we consider more general decaying profiles that lie in between these two extreme cases. We characterize the scaling limit of transfer matrices and determine the point process limit of eigenvalues near a fixed energy in the bulk, in terms of solutions to coupled SDEs. We obtain new point processes that share similar properties to the $\text{Sch}_\tau$ process. We determine the shape profile of eigenfunctions after a suitable rescaling, that corresponds to a uniformly chosen eigenvalue of  $H_n$. We also give a more detailed description of the newly defined point processes, including the probability of small and large gaps and a variance estimate.
\end{abstract} 

\maketitle

\allowdisplaybreaks

\keywords{}

\keywords{Keywords: Random Schrödinger operators; scaling limits; localization.}

\section{Introduction}
To study the transition from the localized to the delocalized phase of Anderson operators, Kritchevski, Valkó and Virag \cite{kritchevski2011scaling} introduced the following random Schrödinger operator defined on $\mathbb{Z}_n:=\{0,1,\cdots,n\}\subset\mathbb{Z}$
\begin{equation}
    (H_n\psi)_\ell=\psi_{\ell-1,n}+\psi_{\ell+1,n}+v_{\ell,n}\psi_{\ell,n},\quad \psi_0=\psi_{n+1}=0,
\end{equation}
and considered the \textbf{vanishing model} with potential 
\begin{equation}\label{vanishing}
    v_{\ell,n}=\sigma \omega_\ell/n^\alpha,
\end{equation}
and the \textbf{decaying model} with potential
\begin{equation}\label{decaying}
    \quad v_{\ell,n}=\sigma \omega_\ell/\ell^\alpha
\end{equation}
for some $\alpha>0$. The random variables $\omega_k$ are independent, mean 0 and variance 1 random variables. The phase transition takes place at the value $\alpha=\frac{1}{2}$, where interesting point process scaling limits of the eigenvalue distributions are expected. Prior to the work \cite{kritchevski2011scaling}, the spectral properties of the decaying model were investigated in \cite{kotani1988one}, \cite{kiselev1998modified}, showing that for the infinite volume version of the decaying model, the spectrum is absolutely continuous for $\alpha>\frac{1}{2},$ is pure point for $\alpha\in(0,\frac{1}{2})$ and has mixed behavior for $\alpha=\frac{1}{2}.$ Moreover, Poisson statistics is expected for eigenvalues of the decaying model when $\alpha\in(0,\frac{1}{2})$, in the sense of \cite{minami1996local}, \cite{germinet2014spectral}. For the classical decaying model, one should also mention the works \cite{killip2007eigenfunction} and \cite{minami2007theory}.

As the potentials $v_{k,n}$ are vanishing, the eigenvalues of $H_n$ should approximate that of the free Laplacian, which has density of states described by the arcsine law $\frac{\rho}{2\pi}$ where
\begin{equation}\label{largevolume}
    \rho=\rho(E)=1/\sqrt{1-E^2/4}.
\end{equation}

To obtain nontrivial scaling limits, we consider the spectrum $\Lambda_n$ of the rescaled operator
\begin{equation}
    \rho n(H_n-E)
\end{equation} for any $E\in(-2,2)$.
Define
$$z=E/2+i\sqrt{1-(E/2)^2},$$
then it is proved in \cite{kritchevski2011scaling} that for the vanishing model \eqref{vanishing} with $\alpha=\frac{1}{2}$, for $0<|E|<2$ and with $\tau=(\sigma\rho)^2$, we have the convergence in distribution of the point process $\Lambda_n-\arg(z^{2n+2})-\pi$ to the point process
$\text{Sch}_\tau:=\{\lambda:\varphi^{\lambda/\tau}(\tau)\in 2\pi\mathbb{Z}\}$,
where $\tau=(\sigma\rho)^2$ and $\varphi^{\lambda}$ solves the family of SDEs
\begin{equation}
    d\varphi^{\lambda}(t)=\lambda dt+d\mathcal{B}+\text{Re}[e^{-i\varphi^\lambda(t)}d\mathcal{W}],\quad\varphi^\lambda(0)=0
\end{equation}
and $\mathcal{B}$,$\mathcal{W}$ are independent real and complex standard Brownian motions. For the decaying model \eqref{decaying} with $\alpha=\frac{1}{2}$, they proved the point process $\Lambda_n$ converges to a point process $\Lambda$ that agrees with the $\text{Sine}_\beta$ process from random matrix theory \cite{valko2009continuum}, with $\beta=\frac{2}{(\sigma\rho)^2}$.

 When $\alpha>\frac{1}{2}$ and the potentials decay faster, $\Lambda_n$ converges to a deterministic limit, called the clock (or picket fence) process
$$\text{clock}_\eta=\{2\pi k+\pi +2\eta,k\in\mathbb{Z}\}.$$ See \cite{flores2023one}, \cite{killip2009eigenvalue}, \cite{last2008fine} for convergence to the Clock process in related discrete models, and see \cite{nakano2014level} and \cite{MR3379349} for convergence to the Clock process and Sch (resp. $\operatorname{Sine}_\beta$) processes on vanishing (resp. decaying) continuum models in supercritical and critical cases.
When $\alpha\in(0,\frac{1}{2})$, then the potentials decay slower than the critical value, and the point process $\Lambda_n$ should have a Poisson limit, though the Poisson limit is currently only proved in \cite{kotani2017poisson} for a continuous time model with randomness arising from a Brownian motion. There is also a recent study on the scaling limit of eigenvalues at the edge $\{\pm 2\}$ where a different scaling will arise, see \cite{han2023universal}.

We will now further investigate the intermediate case $\alpha=\frac{1}{2}$. Inspired by the vanishing and decaying models, we consider the \textbf{mixed vanishing-decaying model}  
\begin{equation}
    \label{mixedvanishingdecaying}
v_{k,n}=\sigma\frac{\omega_k}{n^\eta(n+1-k)^{\frac{1}{2}-\eta}}\end{equation}for some $\eta\in[0,\frac{1}{2}]$. In choosing $\eta=\frac{1}{2}$ we recover the vanishing model, and in choosing $\eta=0$ and relabeling $\ell$ by $n+1-\ell$, we recover the decaying model. 

To simplify some computations, in this paper we use the convention that, at site $\ell$ the weight is $\frac{1}{n^\eta(n+1-\ell)^{\frac{1}{2}-\eta}}$ rather than the more natural choice $\frac{1}{n^\eta \ell^{\frac{1}{2}-\eta}}$. A great portion of the computations in this paper can be carried over to more general decaying profiles that are locally slow varying and have order $1/{\sqrt{n}}$ in most of the sites.

Throughout this paper we assume the random variables $\omega_k$ are independent, have mean $0$ and variance $1$, and their third moments are bounded.

\subsection{Scaling limits of transfer matrices}
The first step to study our mixed vanishing-decaying model $H_n$ is to work out the scaling limits of transfer matrices. For $x\in\mathbb{R}$ define
$$T(x):=\begin{pmatrix}x&-1\\1&0\end{pmatrix}.$$

We consider eigenvalues $E+\frac{\lambda}{\rho n}$ near some bulk energy $E$, and define 
\begin{equation}\label{variableis}
\epsilon_\ell=\frac{\lambda}{\rho n}-\frac{\sigma\omega_\ell}{n^\eta (n+1-\ell)^{\frac{1}{2}-\eta}}.\end{equation}
Then the transfer matrix is 
\begin{equation}\label{transferis}
M_\ell^\lambda=T(E+\epsilon_\ell)T(E+\epsilon_{\ell-1})\cdots T(E+\epsilon_1),\quad 0\leq \ell\leq n,
\end{equation}
and $E+\frac{\lambda}{\rho n}$ is an eigenvalue of $H_n$ if and only if 
$$M_n^\lambda\begin{pmatrix} 1\\0\end{pmatrix}\parallel \begin{pmatrix}0\\1\end{pmatrix}.$$
To determine a limit of $M_\ell^\lambda$ as $n\to\infty$, we need to work with $Q_\ell^\lambda$ defined as
\begin{equation}\label{307rescaling}
Q_\ell^\lambda=T^{-\ell}(E)M_\ell^\lambda,\quad 0\leq\ell\leq n.\end{equation}
and study the scaling limit of $Q_\ell^\lambda$.

For any $E\in(-2,2)$, it will be convenient to diagonalize $T(E):=ZDZ^{-1}$, where
\begin{equation} \label{Ediagonalized}
    D=\begin{pmatrix}
        \bar{z}&0\\0&z
    \end{pmatrix},\quad Z=\frac{i\rho(E)}{2}\begin{pmatrix}\bar{z}&-z\\1&-1\end{pmatrix},\quad Z^{-1}=\begin{pmatrix}1&-z\\1&-\bar{z}\end{pmatrix},\quad z=\frac{E}{2}+i\sqrt{1-\frac{E^2}{4}}.
\end{equation}

We prove:

\begin{theorem}\label{convergence2.1}
First assume $0<|E|<2$ and $\eta\in(0,\frac{1}{2}]$. Given $\mathcal{B}(t),$ $\mathcal{B}_2(t)$ and $\mathcal{B}_3(t)$ standard Brownian motions in $\mathbb{R}$ and mutually independent, denote by $\mathcal{W}(t)=\frac{1}{\sqrt{2}}(\mathcal{B}_2(t)+i\mathcal{B}_3(t)).$ Then 
$$(Q^\lambda_{\lfloor nt\rfloor},0\leq t\leq 1)\Rightarrow (Q^\lambda(t),0\leq t\leq 1),\quad n\to\infty,\quad $$
where $Q^\lambda$ is the unique strong solution to the SDE
\begin{equation}\label{noisecoefficientofQ}
    dQ^\lambda=\frac{1}{2}Z\left(\begin{pmatrix} i\lambda&0\\0&-i\lambda\end{pmatrix} dt+\frac{\sigma\rho}{(1-t)^{\frac{1}{2}-\eta}}\begin{pmatrix}
        id\mathcal{B}& d\mathcal{W}\\d\bar{\mathcal{W}}&-id\mathcal{B} \end{pmatrix}\right)Z^{-1}Q^\lambda,\quad Q^\lambda(0)=I.
\end{equation}
Note that although the coefficient of the SDE solved by $Q^\lambda$ grows to infinity at $t=1$, the quadratic variation of the process is still finite as the integral $\int_0^1\frac{1}{(1-t)^{1-2\eta}}<\infty$ and $\|Z\|$ is bounded, so the value $Q^\lambda(1)$ can be uniquely defined as the almost sure limit of $\lim_{t\uparrow 1}Q^\lambda(t)$.

The stated convergence takes place in $\lambda$- finite dimensional distributions, uniformly in $t\in[0,1-\epsilon]$ for any $\epsilon>0$. 

In the case $E=0$, we still have the convergence but now $Q^\lambda$ solves the SDE
\begin{equation}
\label{equationQlambda}dQ^\lambda=\frac{1}{2}Z\left(\begin{pmatrix} i\lambda&0\\0&-i\lambda\end{pmatrix} dt+\frac{\sigma\rho}{(1-t)^{\frac{1}{2}-\eta}}\begin{pmatrix}
        id\mathcal{B}_1& id\mathcal{B}_2\\-id\mathcal{B}_2&-id\mathcal{B}_1 \end{pmatrix}\right)Z^{-1}Q^\lambda,\quad Q^\lambda(0)=I.
\end{equation}
where $\mathcal{B}_1,\mathcal{B}_2$ are two independent real Brownian motions.
\end{theorem}

In the meantime, we will establish tightness estimates for the transfer matrices and expected number of eigenvalues in a small interval. We prove:

\begin{theorem}\label{theorem3.1good}
Assume $\eta\in(0,\frac{1}{2}]$ and $\sigma<\infty$. Then 
    \begin{enumerate}
        \item  Fix $E\in(-2,2)$ and $R<\infty$, we can find $c>0$ such that for any $t>0$ and $n$ large, on an event of probability no less than $1-c/t$, 
        \begin{enumerate}
            \item there holds
      \begin{equation}
            \max_{0\leq\ell\leq n,|\lambda|\leq R}\text{Tr}M_\ell^\lambda M_\ell^{\lambda*}<t.
        \end{equation}
        \item Given an eigenvector $\psi$ of $H_n$ that is $L^2$-normalized and has eigenvalue in $[E-\frac{R}{\rho n},E+\frac{R}{\rho n}]$, 
            \begin{equation}\label{eigenpairups}
                \frac{2}{(n+1)t^2}<|\psi_\ell|^2+|\psi_{\ell+1}|^2<\frac{2t^2}{n+1},\quad 0\leq\ell\leq n.
            \end{equation}
        \end{enumerate}
        \item  For any $R>0$ consider the interval $\Delta_n(E)=(E-\frac{R}{n\rho(E)},E+\frac{R}{n\rho(E)})$. Denote by $N_n(E)$ the number of eigenvalues of $H_n$ belonging to $\Delta_n(E)$. Then for arbitrary $\epsilon>0$,
        \begin{equation}\label{eigenvectorupper}
        \sup_n\sup_{E\in(-2+\epsilon,2-\epsilon)}\mathbb{E}[N_n(E)^{3/2}]<\infty.\end{equation}
    \end{enumerate}
\end{theorem}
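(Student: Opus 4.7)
For (1)(a), I will work with $X_\ell := \text{Tr}\bigl(Q_\ell^\lambda (Q_\ell^\lambda)^*\bigr)$ rather than $M_\ell^\lambda$ directly. Since $T(E)=ZDZ^{-1}$ with $D$ unitary, the powers $T^{\pm\ell}(E)$ are bounded uniformly in $\ell$ on compact subsets of $(-2,2)$, so $X_\ell$ is comparable to $\text{Tr}\bigl(M_\ell^\lambda M_\ell^{\lambda*}\bigr)$ up to constants depending only on $E$. Writing $v_{\ell,n}:=n^{\eta}(n+1-\ell)^{\frac{1}{2}-\eta}$ for brevity, the one-step recursion
\[
Q_\ell^\lambda = \bigl(I+\epsilon_\ell R_\ell^{(n)}\bigr)Q_{\ell-1}^\lambda,\qquad R_\ell^{(n)} := T^{-\ell}(E)\, e_1 e_1^{T}\, T^{\ell-1}(E),
\]
has $\|R_\ell^{(n)}\|$ uniformly bounded, and expansion gives $X_\ell-X_{\ell-1}=2\epsilon_\ell u_\ell+\epsilon_\ell^2 w_\ell$ with $|u_\ell|,|w_\ell|\leq CX_{\ell-1}$. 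Since $\mathbb{E}[\epsilon_\ell\mid\mathcal{F}_{\ell-1}]=\lambda/(\rho n)$ and $\mathbb{E}[\epsilon_\ell^2\mid\mathcal{F}_{\ell-1}]\leq C(1/n^2+v_{\ell,n}^{-2})$, the conditional drift is at most $C'(1/n+v_{\ell,n}^{-2})X_{\ell-1}$. The arithmetic fact underpinning everything is $\sum_\ell v_{\ell,n}^{-2}=O(1)$ uniformly in $n$ for every $\eta\in(0,\frac{1}{2}]$, since $\sum_{k=1}^n k^{2\eta-1}\asymp n^{2\eta}$ cancels the $n^{-2\eta}$ prefactor. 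Consequently $X_\ell\pi_\ell$ with $\pi_\ell:=\prod_{k\leq\ell}\bigl(1+C'(1/n+v_{k,n}^{-2})\bigr)^{-1}\in[c_0,1]$ is a nonnegative supermartingale, and Doob's maximal inequality yields $\mathbb{P}(\max_\ell X_\ell>t)\leq c/t$. Uniformity over $|\lambda|\leq R$ is obtained from a $1/n$-net combined with the Lipschitz bound $\|\partial_\lambda M_\ell^\lambda\|\leq(\rho n)^{-1}\sum_{k\leq\ell}\|M_k^\lambda\|^2$, bootstrapped from the bound just proved.

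Part (1)(b) is a deterministic consequence of (1)(a). With $\Psi_\ell:=(\psi_{\ell+1},\psi_\ell)^T$, the eigenvalue equation gives $\Psi_\ell=M_\ell^\lambda\Psi_0$ and $\Psi_0=(\psi_1,0)^T$. Since $\det M_\ell^\lambda=1$, its singular values are $s$ and $1/s$, so on the event from (1)(a) both $\|M_\ell^\lambda\|$ and $\|(M_\ell^\lambda)^{-1}\|$ are at most $\sqrt{t}$. This sandwiches $\|\Psi_\ell\|^2\in[|\psi_1|^2/t,\,t|\psi_1|^2]$; combined with the identity $\sum_{\ell=0}^n\|\Psi_\ell\|^2=2$ (from $\psi_0=\psi_{n+1}=0$ and $L^2$-normalization) this pins down $|\psi_1|^2\in[2/((n+1)t),\,2t/(n+1)]$, from which the stated bound on $|\psi_\ell|^2+|\psi_{\ell+1}|^2=\|\Psi_\ell\|^2$ is immediate.

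For (2), I express $N_n(E)$ via the winding of the Prüfer angle $\lambda\mapsto\varphi_n^\lambda$ over $[-R,R]$: Sturm monotonicity makes $\varphi_n^\lambda$ nondecreasing in $\lambda$, so $N_n(E)\leq(\varphi_n^R-\varphi_n^{-R})/(2\pi)+1$. The difference $\varphi_n^R-\varphi_n^{-R}$ decomposes via the Prüfer phase recursion into a deterministic drift of size $O(R)$ plus a martingale whose quadratic variation is again controlled by $\sum v_{\ell,n}^{-2}=O(1)$, so Burkholder--Davis--Gundy yields $\mathbb{E}\bigl[(\varphi_n^R-\varphi_n^{-R})^{3/2}\bigr]=O(1)$ uniformly in $n$ and in $E$ on compact subsets of $(-2,2)$, giving the stated estimate. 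The main obstacle throughout is uniformity in $E$: the diagonalizer $Z(E)$, its inverse, and the density $\rho(E)$ all degrade as $|E|\to 2$, so constants in the martingale analysis must be tracked explicitly, though restricting to $E\in[-2+\epsilon,2-\epsilon]$ keeps them bounded; a secondary subtlety is the $\lambda$-Lipschitz step in (1)(a), which requires a bootstrap since the natural bound on $\partial_\lambda M_\ell^\lambda$ involves $\|M\|^2$.
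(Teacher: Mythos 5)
Your fixed-$\lambda$ argument for (1)(a) (the supermartingale $X_\ell\pi_\ell$ built from the drift bound $C'(1/n+v_{\ell,n}^{-2})X_{\ell-1}$ and the summability $\sum_\ell v_{\ell,n}^{-2}=O(1)$) is sound and is the same arithmetic input the paper isolates in \eqref{usefulhaha}--\eqref{takingintoaccount}; your deterministic derivation of \eqref{eigenpairups} from $\det M_\ell^\lambda=1$ and $\sum_\ell\|\Psi_\ell\|^2=2$ is also correct. The genuine gap is the passage to uniformity in $|\lambda|\le R$. Doob plus Markov gives only a $c/t$ failure probability \emph{per value of} $\lambda$, so a union bound over a net of spacing $1/n$ costs a factor of order $Rn$, and the resulting bound $Rn\cdot c/t$ is vacuous for the claimed event of probability $1-c/t$. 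The bootstrap does not rescue this: the correct derivative bound is $\|\partial_\lambda M_\ell^\lambda\|\le\frac{1}{\rho n}\sum_{k\le\ell}\|M_\ell^\lambda (M_k^\lambda)^{-1}\|\,\|M_{k-1}^\lambda\|\le\frac{\ell}{\rho n}\max_k\|M_k^\lambda\|^{3}$ (cubic, not the quadratic bound you wrote, since $\|M_k^{-1}\|=\|M_k\|$ enters an extra time), and it involves the norms at the \emph{off-net} $\lambda$; closing the bootstrap below $2\sqrt t$ then forces net spacing $\lesssim 1/t$, i.e.\ $\gtrsim Rt$ net points, and the union bound again gives $O(R)$ rather than $O(1/t)$. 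With only a first-moment (or even third-moment) tail per point, no net/chaining of this crude form can produce the stated $1-c/t$ uniformly in $\lambda$. The standard repair — and what the works the paper defers to (Kritchevski--Valkó--Virág, Rifkind--Virág) actually use — is analyticity: each entry of $M_\ell^\lambda$ is a polynomial in $\lambda$, so $\lambda\mapsto\operatorname{Tr}(M_\ell^\lambda M_\ell^{\lambda*})$ is subharmonic and $\sup_{|\lambda|\le R}\operatorname{Tr}(M_\ell^\lambda M_\ell^{\lambda*})\le C_R\int_{|\lambda'|\le R+1}\operatorname{Tr}(M_\ell^{\lambda'} M_\ell^{\lambda'*})\,d^2\lambda'$; the $\lambda$-averaged, compensated process is again a nonnegative supermartingale, so a single application of Doob yields the uniform-in-$\lambda$ bound with the claimed $1-c/t$. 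You need to replace the net argument by this (or an equivalent sup-moment bound exploiting analyticity).

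Your route to \eqref{eigenvectorupper} is genuinely different from the paper's. The paper proves the uniform third-moment bound $\sup_n\max_\ell\mathbb{E}\|X_\ell-I\|^3<\infty$ by a discrete Gronwall argument against the product bound \eqref{usefulhaha}, and then invokes the bound of Rifkind--Virág (their Theorem 6.1) controlling the number of eigenvalues in $\Delta_n(E)$ by transfer-matrix norms — this is why the paper assumes bounded third moments of $\omega_k$. Your alternative via Sturm monotonicity of the Prüfer phase, the winding bound $N_n(E)\le(\varphi_n^R-\varphi_n^{-R})/(2\pi)+1$, and Burkholder--Davis--Gundy for the $3/2$-moment is plausible and would even need only second moments of $\omega_k$ (by Jensen, $\mathbb{E}[[M]^{3/4}]\le(\mathbb{E}[M])^{3/4}$), which is an attractive feature; but to make it complete you must control the second-order (Itô-type) correction terms in the discrete phase recursion and the steps where $|\omega_\ell|/v_{\ell,n}$ is not small (using that each step advances the phase by at most $\pi$ and that such steps are rare since $\sum_\ell v_{\ell,n}^{-2}=O(1)$), and justify the monotonicity-in-$\lambda$ statement for the discrete phase with the given boundary condition. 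As written, part (2) is a sketch of a viable alternative, while part (1)(a) has a concrete quantitative flaw in the uniformity step.
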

The estimate \ref{eigenvectorupper} leads to Wegner's estimate (see for example \cite{kirsch2007invitation}) for the Anderson operator $H_n$. These estimates are essential for us to establish process level scaling limits, as stated in the next paragraph.

\subsection{The point process limit}
Now we investigate the point process limit of $\Lambda_n$ as $n\to\infty$. The limiting point process is not the Clock process or Poisson process, and does not match existing point processes from random matrix models. Rather, they are characterized as zeros of complex analytic functions.

We introduce the following set of Prüfer coordinates: Since for any real vector $(a,b)^t$,
$$Z^{-1}\begin{pmatrix} a\\ b\end{pmatrix}=\begin{pmatrix} a-bz\\ \overline{a-bz}\end{pmatrix},$$
we shall write
\begin{equation}\label{nicestory}
    \begin{pmatrix}
q^\lambda(t)\\\overline{q^\lambda(t)}\end{pmatrix}:=Z^{-1}Q(\lambda,t)\begin{pmatrix}1\\0
    \end{pmatrix}
\end{equation}
where $q^\lambda(t)$ satisfies $q^\lambda(0)=1$. Define also 
\begin{equation}\label{rlambdathetalambda}(q^\lambda(t))^2=e^{r^\lambda(t)+i\theta^\lambda(t)}.\end{equation}

It will be shown in Proposition \ref{whatisthesde?} that $\theta^\lambda$ solves the SDE
\begin{equation}\label{dphilambdat}
    d\theta^\lambda(t)=\lambda dt+\frac{\sigma\rho}{(1-t)^{\frac{1}{2}-\eta}}[d\mathcal{B}+\text{Im}[e^{-i\theta^\lambda(t)}d\mathcal{W}]],\quad \theta^\lambda(0)=0.
\end{equation}

We note that by a change of variable $t=1-f(s)$, $f(s):=(1-2\eta s)^\frac{1}{2\eta}$, then the relative phase function $\alpha_s^\lambda:=\theta^\lambda(s)-\theta^0(s)$ solves the SDE
$$
d\alpha_s^\lambda=\lambda(-f'(s))ds+\theta\rho\text{Im}[(e^{-i\alpha_s^\lambda}-1)d\widetilde{W}_s],\quad d\widetilde{W}_s:=e^{-i\theta^0(s)}dW_s,
$$ and the latter has the form of the Brownian Carousel as in equation (2) of \cite{valko2009continuum}. Thus the point process $\eta Sch^*$ can be described by a Brownian carousel.

 Now we define the following point process analogous to the $sch_\tau$ process in \cite{kritchevski2011scaling}: for each $\phi\in[0,2\pi),$ define
\begin{equation}\label{etaschphi}
    \eta Sch^\phi:=\{\lambda\in\mathbb{R}:\theta^\lambda(1)\in 2\pi\mathbb{Z}+\phi\}, 
\end{equation}
and simply denote $\eta Sch$ for $\eta Sch^0$.

We will also consider $$\eta Sch^*=^d\eta Sch^U$$ where $U$ is uniform random variable on $[0,2\pi]$ independent of all other random variables.

Note that by a slight abuse of notation, the definition of $\eta Sch$ also involves $\sigma$ and $\rho$. We choose to keep the dependence throughout this article as it will not cause confusions and will clarify what coefficients the estimates depend on.

The following corollary shows that the point process $\Lambda_{n,E}$ converges, up to a subsequence of $n$, to a counting measure defined with respect to the SDE $Q^\lambda$.

\begin{corollary}\label{convergenceprocess}
    For any subsequence $n_j$ such that $z^{n_j}$ converges, we have $T^{n_j}(E)\to\tilde{T}$ for some $2\times 2$ matrix $\tilde{T}$. Then the following claims are true:\begin{enumerate}
        \item The analytic function $M_{n_j}^\lambda$ (with $n=n_j$ in the definition \eqref{variableis},\eqref{transferis}) has a distributional limit $\tilde{T}Q^\lambda(1)$. \item The point process $\Lambda_{n_j}$ converges in distribution to the counting measure on the zero set of the analytic function $\lambda\mapsto [\tilde{T}Q^\lambda(1)]_{11}.$
        \item Define an analytic function $Q_{n,E}(\lambda,t)$ as the piecewise linear interpolation of the step function $Q^\lambda_{\lfloor nt\rfloor}$ defined in \eqref{307rescaling}. Then $Q_{n,E}(\lambda,t)$ converges to $Q(\lambda,t)$ in distribution for the uniform topology of $\mathbb{C}\times[0,1-\epsilon]$ and any $\epsilon>0$.
    \end{enumerate}
\end{corollary}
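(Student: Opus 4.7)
The plan is to combine the process-level convergence of Theorem \ref{convergence2.1} with the tightness and Wegner-type estimates of Theorem \ref{theorem3.1good}, then invoke Hurwitz's theorem together with Arzelà–Ascoli to transfer convergence of transfer matrices into convergence of their zero sets and into joint convergence of the interpolated matrix-valued process.

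For part (1), Theorem \ref{convergence2.1} already gives $Q^\lambda_{\lfloor nt\rfloor}\Rightarrow Q^\lambda(t)$ in $\lambda$-finite-dimensional distributions uniformly in $t\in[0,1-\epsilon]$ for every $\epsilon>0$; the task is to extend this up to $t=1$. I would show both the continuous limit and its discrete prelimit have vanishing increments near $t=1$. For the SDE \eqref{noisecoefficientofQ} the noise coefficient $(1-t)^{-(1/2-\eta)}$ with $\eta\in(0,1/2]$ is square-integrable, with integrated quadratic variation on $[1-\epsilon,1]$ of order $\epsilon^{2\eta}$; Burkholder–Davis–Gundy combined with a Gronwall argument then yields $\|Q^\lambda(1)-Q^\lambda(1-\epsilon)\|=O_{\mathbb P}(\epsilon^\eta)$. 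On the discrete side, Theorem \ref{theorem3.1good}(1a) provides an a priori bound on $\max_\ell\|M_\ell^\lambda\|$, and an analogous increment estimate obtained from the recursion $M_\ell^\lambda=T(E+\epsilon_\ell)M_{\ell-1}^\lambda$ — with variance accounting matching the continuous case — gives the matching $O_{\mathbb P}(\epsilon^\eta)$ control on $Q^\lambda_n(Q^\lambda_{\lfloor n(1-\epsilon)\rfloor})^{-1}-I$. Together with the hypothesis $T^{n_j}(E)\to\tilde T$ this yields $M_{n_j}^\lambda=T^{n_j}(E)Q^\lambda_{n_j}\Rightarrow \tilde T Q^\lambda(1)$.

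For part (2), I would apply Hurwitz's theorem to the analytic functions $f_n(\lambda):=[M_n^\lambda]_{11}$, whose real zero set is exactly $\Lambda_n$ by the eigenvalue criterion stated before \eqref{307rescaling}. What is needed is convergence of $f_{n_j}$ to $f(\lambda):=[\tilde T Q^\lambda(1)]_{11}$ locally uniformly on $\mathbb C$. I would first promote the real-$\lambda$ convergence of part (1) to complex $\lambda$: since $\lambda\mapsto M_n^\lambda$ is polynomial in $\lambda$, the a priori bound from Theorem \ref{theorem3.1good}(1a) applied on a slightly enlarged disc yields local equiboundedness, so Montel's theorem gives relative compactness, and any subsequential limit must agree with $\tilde T Q^\lambda(1)$ on the real line, hence everywhere on $\mathbb C$ by analytic continuation. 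Non-vanishing of $f$ is ensured by the drift $\tfrac{i\lambda}{2}$ in the eigenbasis of \eqref{noisecoefficientofQ}, which makes $f$ depend nontrivially on $\lambda$, while the Wegner-type bound from Theorem \ref{theorem3.1good}(2) rules out accumulation of zeros at any point. Hurwitz's theorem then gives convergence of the counting measures.

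Part (3) is essentially a repackaging of the preceding estimates. The piecewise-linear interpolation $Q_{n,E}(\lambda,t)$ is analytic in $\lambda$ for each $t$; combining the $\lambda$-finite-dimensional uniform-in-$t$ convergence from Theorem \ref{convergence2.1} with the complex-$\lambda$ equiboundedness established in part (2), Arzelà–Ascoli promotes this to uniform-in-distribution convergence on $\mathbb C\times[0,1-\epsilon]$. The main obstacle throughout will be the uniform-in-$n$ control of $M_\ell^\lambda$ for complex $\lambda$, particularly as $t$ approaches $1$; once that is secured via Theorem \ref{theorem3.1good} together with the integrability of the squared noise coefficient at $t=1$, the rest of the argument is standard.
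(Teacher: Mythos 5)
Your proposal is correct and uses exactly the three ingredients the paper's sketch identifies: the a priori bound on transfer matrices from Theorem \ref{theorem3.1good}, the $\lambda$-finite-dimensional convergence of $Q_n^\lambda$ from Theorem \ref{convergence2.1}, and properties of analytic functions (Montel compactness plus analytic continuation for part (2), Hurwitz for the point process). You also fill in the one step the paper leaves implicit in part (1) — namely upgrading the convergence on $[0,1-\epsilon]$ to convergence at $t=1$ via the $\epsilon^{2\eta}$ decay of the quadratic variation near $t=1$, matched on the discrete side — which is a genuine and correct addition, not a deviation in approach.
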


This technical Corollary \ref{convergenceprocess} sets up the ground for proving the following distributional limit of the rescaled point process $\Lambda_n$:

\begin{corollary}\label{corollary1.4}
    Recall that $\Lambda_n$ is the spectrum of the rescaled operator $\rho n(H_n-E)$. Then, whenever $0<|E|<2$ and under the assumption that $\omega_k$ are independent mean 0 variance 1 random variables with bounded third moment, the point process $\Lambda_n-\arg(z^{2n+2})-\pi$ converges in distribution to the point process 
    $\eta Sch$ defined in \eqref{etaschphi}.
\end{corollary}

\subsection{Shape of eigenfunctions for the vanishing-decaying model}

Having worked out the point process limit of eigenvalues of $H_n$ near a bulk energy $E$, we also study the joint scaling limit of eigenvalue-eigenvector pairs. This has been done in Rifkind and Virag \cite{rifkind2018eigenvectors} for the vanishing model in discrete time, and recently in Nakano \cite{nakano2022shape} in a continuous time model driven by Brownian motion, for both vanishing and decaying model. In this paper we generalize these results to the mixed vanishing-decaying model \eqref{mixedvanishingdecaying}.

For $\mu$ some eigenvalue of $H_n$ and the corresponding eigenvector $\psi^\mu$, we consider a probability measure on $[0,1]$ with density 
$$n|\psi^\mu(\lfloor nt\rfloor)|^2dt$$
where the leading factor $n$ is added to normalize it into a probability measure. Consider $\mathcal{M}([0,1])$ the space of probability measures on $[0,1]$, endowed with the weak topology, that is, $\mu_n\to\mu$ if and only if $\int fd\mu_n\to fd\mu$ for all $f\in\mathcal{C}_b^\infty([0,1];\mathbb{R})$.
The main result concerns joint convergence in distribution of the eigenvalue-eigenvector pair
$$(\mu,n|\psi^\mu(\lfloor nt\rfloor)^2|dt)\in\mathbb{R}\times\mathcal{M}([0,1])$$
once we uniformly choose $\mu$ out of eigenvalues of $H_n$.

\begin{theorem}\label{shapetheorem}
    Given a random variable $E$ following the arcsine law and $U$ the uniform distribution on $[0,1]$. Consider $\mathcal{Z}$ a standard two-sided Brownian motion. Assume $E,\mathcal{Z},U$ are mutually independent. 

    Uniformly choose $\mu$ from eigenvalues of $H_n$ and denote by $\psi^\mu$ the normalized eigenvector. Then we have the convergence in distribution
    $$\begin{aligned} &\left(\mu,n|\psi^\mu(\lfloor nt\rfloor)|^2 dt\right)\\
    &\Rightarrow \left(E,
\frac{\exp\left(\mathcal{Z}_{\frac{\sigma^2\rho^2}{4\eta}[(1-U)^{2\eta}-(1-t)^{2\eta}]}-\frac{\sigma^2\rho^2}{8\eta}\left|(1-U)^{2\eta}-(1-t)^{2\eta}\right|\right)dt}{\int_0^1 ds \exp\left(\mathcal{Z}_{\frac{\sigma^2\rho^2}{4\eta}[(1-U)^{2\eta}-(1-s)^{2\eta}]}-\frac{\sigma^2\rho^2}{8\eta}\left|(1-U)^{2\eta}-(1-s)^{2\eta}\right|\right)}
\right)
 \end{aligned}   $$
\end{theorem}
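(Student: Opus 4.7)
I will adapt the strategy of Rifkind--Vir\'ag and Nakano, using the Pr\"ufer-coordinate scaling limit of Theorem~\ref{convergence2.1} together with a two-sided transfer-matrix/size-biasing argument. First, the Wegner--Minami type estimate \eqref{eigenvectorupper} combined with the density-of-states asymptotics $\mathbb{E}[N_n(E)]\approx R/\pi$ shows that the bulk location of a uniformly chosen eigenvalue of $H_n$ converges in law to the arcsine distribution on $(-2,2)$; so it suffices to prove the theorem after conditioning on $\mu\in[E-R/(n\rho(E)), E+R/(n\rho(E))]$ for a fixed bulk energy $E\in(-2,2)$.

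Next I would translate the eigenfunction shape into the Pr\"ufer radius $r^\mu(t):=\log|q^\mu(t)|^2$. The recursion with $\psi_0^\mu=0$ gives $(\psi^\mu_{\ell+1},\psi^\mu_\ell)^T = \psi_1^\mu M_\ell^\mu(1,0)^T$; substituting $M_\ell^\mu = T^\ell(E) Q_\ell^\mu$ and the diagonalization \eqref{Ediagonalized} yields $\|M_\ell^\mu(1,0)^T\|^2 = c\,|q_\ell^\mu|^2$ plus oscillatory terms in $z^{2\ell}$. Since $z^2\ne 1$ when $E\in(-2,2)\setminus\{0\}$, these oscillations average out against any $C_b^\infty([0,1])$ test function, and after $L^2$-normalization the random measure $n|\psi^\mu(\lfloor nt\rfloor)|^2\,dt$ has the same weak limit as $e^{r^\mu(t)}\,dt/\int_0^1 e^{r^\mu(s)}\,ds$.

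An It\^o computation on $(q^\lambda)^2 = e^{r^\lambda+i\theta^\lambda}$ using the SDE in Theorem~\ref{convergence2.1} then gives
\begin{equation*}
dr^\lambda(t) = \frac{\sigma\rho}{(1-t)^{\frac12-\eta}}\,\mathrm{Re}\bigl(e^{-i\theta^\lambda(t)}\,d\mathcal{W}(t)\bigr) + \frac{(\sigma\rho)^2}{4(1-t)^{1-2\eta}}\,dt ,
\end{equation*}
with martingale quadratic-variation density $(\sigma\rho)^2/(2(1-t)^{1-2\eta})$. Under the deterministic time change $s(t)=\frac{(\sigma\rho)^2}{4\eta}[1-(1-t)^{2\eta}]$, fast mixing of $\theta^\lambda$ and a Dambis--Dubins--Schwarz argument identify $r^\lambda$ in $s$-time as a Brownian motion with drift $+\tfrac12$; note $s(t)-s(U)=\frac{\sigma^2\rho^2}{4\eta}[(1-U)^{2\eta}-(1-t)^{2\eta}]$, exactly the argument of $\mathcal{Z}$ in the statement.

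The main obstacle is to pass from the $\lambda$-indexed family $\{r^\lambda\}$ to the process attached to a uniformly chosen eigenvalue. The identity $\sum_\mu |\psi^\mu(\ell)|^2=1$ yields the coupling
\begin{equation*}
\frac{1}{n+1}\sum_\mu \delta_{(\mu,\psi^\mu)} = \mathbb{E}_{U_n}\Bigl[\sum_\mu|\psi^\mu(U_n)|^2\,\delta_{(\mu,\psi^\mu)}\Bigr],\quad U_n\sim\mathrm{Unif}\{0,\dots,n\},
\end{equation*}
so that a uniform eigenvalue $\mu$ and a uniform position $U_n$ together have the joint distribution of (uniform $U_n$, $\mu$ size-biased by $|\psi^\mu(U_n)|^2\propto e^{r^\mu(U)}$). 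Either (i) a Cameron--Martin/Girsanov computation combined with a Williams-type decomposition of a Brownian motion with drift $+\tfrac12$ about its maximum, or (ii) running a second, independent forward Pr\"ufer radius from the right endpoint $\psi^\mu_{n+1}=0$ and gluing the two processes at their meeting point, then shows that $r^\mu(t)-r^\mu(U)$ is, in the scaling limit, $\mathcal{Z}_{s(t)-s(U)}-\tfrac12|s(t)-s(U)|$ for a standard two-sided Brownian motion $\mathcal{Z}$. Dividing by $\int_0^1 e^{r^\mu(s)}\,ds$ gives the claimed density. The Wegner--Minami bound from Theorem~\ref{theorem3.1good} is essential throughout to justify these couplings uniformly over the $\eta Sch$ eigenvalue structure.
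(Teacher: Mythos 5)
Your overall skeleton (reduce to a bulk energy via the Wegner/Minami-type bound of Theorem \ref{theorem3.1good}, translate the eigenfunction modulus into the Pr\"ufer radius $r^\lambda$, derive the $dr^\lambda$ SDE and the time change $v(t)=\frac{\sigma^2\rho^2}{4\eta}[1-(1-t)^{2\eta}]$, then integrate over $E$ against the arcsine law) matches the paper's proof, and your size-biasing identity is correct: choosing a uniform eigenvalue is the same as choosing a uniform site and then an eigenvalue with probability $|\psi^\mu(\lfloor nU\rfloor)|^2$. But the central step is missing. The coupling identity by itself gives no access to the \emph{path law} of $r^\mu$ for the selected eigenvalue: to convert the size-biasing into a tilting of the SDE law of $r$, you must do a Palm/Kac--Rice computation for the marked eigenvalue point process, and this is exactly where the paper's mechanism lives. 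Concretely, the paper uses the co-area formula with Jacobian $|\partial_\lambda\theta^\lambda(1)|=\phi^\lambda(1)$ together with the identity $\phi^\lambda(t)=\int_0^t e^{r(u)-r(t)}du$ coming from \eqref{4.3.} (Proposition \ref{whatisthesde?}), and then a Girsanov argument (Lemma \ref{lemma3.76}) showing that under the tilting $e^{r(u)-r(1)}$ the time-changed path is $\mathcal{Z}+\tfrac12 f^{v(u)}$ with the tent $f^u(x)=u-|u-x|$; the uniform center $U$ of the theorem is precisely the $u$-variable of that Jacobian integral. Your proposal never produces this Jacobian or any substitute for it, and your two suggested tools do not fill the hole: a Williams-type decomposition about the maximum addresses the argmax of the limiting shape, which is \emph{not} the theorem's $U$ (the tent center), and the ``two-sided gluing'' of forward and backward Pr\"ufer radii is only gestured at — making the matching condition at the meeting point rigorous is again a Kac--Rice-type computation.

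It is worth noting that your route, if completed, collapses onto the paper's: combining the point-process Jacobian with your spectral weight gives, for a fixed site $u$, the tilting $\phi^\lambda(1)\cdot e^{r(u)}/\int_0^1 e^{r(s)}ds=e^{r(u)-r(1)}$, which is exactly the paper's tilting, so the uniform site and the theorem's $U$ do coincide — but only after the Kac--Rice step you omit, and you would then still need the Girsanov identification of the tilted law and the global bookkeeping (uniform integrability of $G_n(E)$ from Theorem \ref{theorem3.1good}, the edge-eigenvalue bound via Lemma 7.4 of \cite{nakano2022shape}, and the $\int g_1(n\rho(x)(x-\mu))\rho(x)dx=\tfrac1n+o(1/n)$ approximation) that the paper carries out in the final subsection. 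Also, two smaller points: no ``fast mixing of $\theta^\lambda$'' is needed — $\mathrm{Re}(e^{-i\theta^\lambda}d\mathcal{W})$ is exactly a Brownian differential since $e^{-i\theta^\lambda}d\mathcal{W}\overset{d}{=}d\mathcal{W}$; and the convergence of Theorem \ref{convergence2.1} is only uniform on $[0,1-\epsilon]$, so extending the shape measure to all of $[0,1]$ needs the argument of Lemma \ref{lema5.1} rather than a direct appeal to the SDE limit at $t=1$.
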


\begin{remark} If we take $\eta=\frac{1}{2}$ (i.e., the vanishing model), and $\sigma\rho=1$, we obtain after Brownian scaling the expression $\frac{1}{\sqrt{2}}\mathcal{Z}_{t-U}-\frac{1}{4}|t-U|$ in the exponential. This is consistent with the result of \cite{rifkind2018eigenvectors}. If we take $\eta\to 0$, we get $\frac{\log|1-U|}{\log|1-t|}$ terms in the exponential, which is consistent with the result of \cite{nakano2022shape}, Theorem 1.1 after a time reversal (the coefficients are different because \cite{nakano2022shape} considered a continuous time model).
\end{remark}

We may also consider potentials that vanish faster than the critical exponent $
    \frac{1}{2}$.
\begin{corollary}\label{reallyfast}
    Now we consider, instead of \eqref{mixedvanishingdecaying},
    \begin{equation}
        v_{k,n}:=\frac{\sigma\omega_k}{n^\eta (n+1-k)^{\tau-\eta}}
    \end{equation}
    for some given $\tau>\frac{1}{2}$ and $\eta\in[0,\tau]$. Then the point process $\Lambda_{n,E}$ converges in distribution to a Clock process, that is, $\Lambda_{n,E}-z^{2n+2}$ converges in distribution to $2\pi\mathbb{Z}$. Theorem \ref{convergence2.1} holds straightforwardly in this setting if we take $\sigma=0$ in \eqref{noisecoefficientofQ} and in \eqref{equationQlambda}. Theorem \ref{theorem3.1good} and Corollary \ref{convergenceprocess}, \ref{corollary1.4} hold true, with the same proof.  Moreover, the result of Theorem \ref{shapetheorem} now becomes, upon taking $\sigma=0$,  
    $$ \left(\mu,n|\psi^\mu(\lfloor nt\rfloor)|^2 dt\right)
    \Rightarrow (E,1_{[0,1]}(t)dt).$$ That is, the scaled eigenfunctions are uniformly spread out. 
\end{corollary}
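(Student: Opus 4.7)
The plan is to observe that strengthening the decay exponent from $\frac{1}{2}$ to $\tau>\frac{1}{2}$ causes the accumulated noise in the transfer matrix dynamics to vanish in the scaling limit. The SDE of Theorem \ref{convergence2.1} then degenerates to its deterministic drift part, and the shape distribution of Theorem \ref{shapetheorem} collapses to the uniform law.

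First I would re-run the martingale decomposition that underlies the proof of Theorem \ref{convergence2.1} with the new noise $\epsilon_\ell=\frac{\lambda}{\rho n}-\frac{\sigma\omega_\ell}{n^\eta (n+1-\ell)^{\tau-\eta}}$. The drift coming from $\lambda/(\rho n)$ is unaffected and produces the term $\frac{1}{2}Z\,\mathrm{diag}(i\lambda,-i\lambda)Z^{-1}\,dt$, while the martingale part at position $t\in[0,1-\epsilon]$ contributes a quadratic variation rate of order
\[
n\cdot\frac{\sigma^2}{n^{2\eta}(n(1-t))^{2\tau-2\eta}}=\frac{\sigma^2}{n^{2\tau-1}(1-t)^{2\tau-2\eta}},
\]
which tends to $0$ uniformly on $[0,1-\epsilon]$ because $2\tau-1>0$. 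Feeding this vanishing variance into the same martingale invariance principle used to prove Theorem \ref{convergence2.1} gives convergence of $Q^\lambda_{\lfloor nt\rfloor}$ to the solution of \eqref{noisecoefficientofQ} with $\sigma$ replaced by $0$. The remaining statements in Theorem \ref{theorem3.1good} and Corollary \ref{convergenceprocess} depend only on upper bounds for the variance of the potentials together with this SDE convergence, so they go through unchanged; the new potentials are pointwise no larger than their critical counterparts up to the constant $\sigma$.

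For Theorem \ref{shapetheorem} I would track the Prüfer variable $r^\lambda$ from \eqref{rlambdathetalambda}, whose exponential governs the squared modulus of the eigenfunction at time $t$. With the noise coefficient in \eqref{noisecoefficientofQ} identically zero, $r^\lambda$ is deterministic and equal to $0$; substituting $\sigma=0$ into the final expression of Theorem \ref{shapetheorem} annihilates both the stochastic exponent $\mathcal{Z}_{\cdot}$ and the drift $\tfrac{\sigma^2\rho^2}{8\eta}|\cdot|$, so the ratio reduces to $e^{0}/\int_0^1 e^{0}\,ds=1$, which yields $\mathbf{1}_{[0,1]}(t)\,dt$. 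No genuine obstacle arises; the only cosmetic issue is that the prefactor $1/(4\eta)$ in Theorem \ref{shapetheorem} becomes singular as $\eta\downarrow 0$, but when $\sigma=0$ it multiplies quantities that are already zero and can be ignored.
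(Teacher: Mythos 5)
Your proposal is correct and is exactly the ``straightforward adaptation'' the paper has in mind (the paper explicitly omits this proof): the per-step variance scales like $\sigma^2 n^{1-2\tau}(1-t)^{-(2\tau-2\eta)}\to 0$ uniformly on $[0,1-\epsilon]$, so the invariance principle yields the $\sigma=0$ SDE, the tightness and process-level arguments only use variance upper bounds, and setting $\sigma=0$ in the shape-theorem density (equivalently $r^\lambda\equiv 0$, $|q^\lambda|\equiv 1$) gives the uniform measure. No gaps worth flagging.
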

This shape of eigenvalue result is consistent with the result of \cite{nakano2022shape}, Theorem 1.1 which considers a model in continuous time. 
    
\begin{remark}
    For potentials vanishing slower than the critical rate $\frac{1}{2}$, Poisson statistics is expected and the rescaled eigenvector should converge to Dirac measure with a uniformly distributed center. However, a proof of these claims are currently only available in the continuous model with randomness from a Brownian motion, see \cite{kotani2017poisson}, \cite{nakano2022shape}.
\end{remark}

\subsection{Properties of the point process}

The point process $\eta Sch$ has many properties that are similar to the point process $Sch_\tau$ introduced in \cite{kritchevski2011scaling}, which correspond to the vanishing model, i.e. $\eta=\frac{1}{2}$. We prove that the point process $\eta Sch$ has the following properties:

\begin{proposition}\label{prop1.5}(Eigenvalue repulsion).
Fix $\eta\in(0,\frac{1}{2}]$. For any $\mu\in\mathbb{R}$ and $\epsilon>0$,
    \begin{equation}\label{largegap3.15}
        \mathbb{P}\{\eta Sch[\mu,\mu+\epsilon]\geq 2\}\leq 4\exp(-C_{\eta,\sigma,\rho}(\log(1/\epsilon)-d(1))^2),
    \end{equation}
    for some constant $C_{\eta,\sigma,\rho}>0$, and where $d(1)=1+\int_0^1 \frac{\sigma^2\rho^2}{(1-t)^{1-2\eta}}dt$. We implicitly require that the expression holds when the value in the square is positive. 
\end{proposition}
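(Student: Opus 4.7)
The plan is to combine strict monotonicity of $\lambda\mapsto\theta^\lambda(1)$ with a high-moment bound on $\Delta\theta:=\theta^{\mu+\epsilon}(1)-\theta^\mu(1)$ coming from an explicit solution of the derivative SDE. First, differentiating the SDE for $\theta^\lambda$ in $\lambda$ yields
\begin{equation*}
d\xi^\lambda(t)=dt+\xi^\lambda(t)\,dN^\lambda_t,\qquad \xi^\lambda(0)=0,\qquad \xi^\lambda:=\partial_\lambda\theta^\lambda,
\end{equation*}
where $N^\lambda_t$ is a continuous real martingale obtained by differentiating the Im-noise term via the chain rule. A direct computation (using $\sin^2+\cos^2=1$) shows the critical fact that the quadratic variation is \emph{deterministic},
\begin{equation*}
d\langle N^\lambda\rangle_t=\frac{\sigma^2\rho^2}{2(1-t)^{1-2\eta}}\,dt,
\end{equation*}
so $N^\lambda$ is a Brownian motion run on a deterministic clock. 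Variation of parameters then gives the positive representation
\begin{equation*}
\xi^\lambda(1)=\int_0^1\exp\!\Big((N^\lambda_1-N^\lambda_s)-\tfrac12(\langle N^\lambda\rangle_1-\langle N^\lambda\rangle_s)\Big)\,ds>0.
\end{equation*}
Positivity means $\lambda\mapsto\theta^\lambda(1)$ is strictly increasing, so two points of $\eta Sch^\phi$ inside $[\mu,\mu+\epsilon]$ force $\Delta\theta\geq 2\pi$.

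Next I would bound $\mathbb{E}[\Delta\theta^p]$ for $p\geq1$. By Jensen applied to the probability measure $ds$ on $[0,1]$, together with the Gaussian MGF (available because $\langle N^\lambda\rangle$ is deterministic, hence $N^\lambda_1-N^\lambda_s$ is $\mathcal{N}(0,\beta-\beta_s)$ with $\beta:=\langle N^\lambda\rangle_1=\sigma^2\rho^2/(4\eta)$ and $\beta_s:=\langle N^\lambda\rangle_s$), one obtains
\begin{equation*}
\mathbb{E}[\xi^\lambda(1)^p]\leq\int_0^1\exp\!\Big(\tfrac{p(p-1)}{2}(\beta-\beta_s)\Big)\,ds\leq\exp\!\Big(\tfrac{p(p-1)}{2}\beta\Big),
\end{equation*}
uniformly in $\lambda$. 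Writing $\Delta\theta=\int_\mu^{\mu+\epsilon}\xi^\lambda(1)\,d\lambda$ and applying Jensen once more in $\lambda$,
\begin{equation*}
\mathbb{E}[\Delta\theta^p]\leq\epsilon^p\exp\!\Big(\tfrac{p(p-1)}{2}\beta\Big).
\end{equation*}

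Finally I would run Markov's inequality and optimize in $p$:
\begin{equation*}
\mathbb{P}(\Delta\theta\geq2\pi)\leq\inf_{p\geq1}\exp\!\Big(p\log(\epsilon/(2\pi))+\tfrac{p(p-1)}{2}\beta\Big).
\end{equation*}
For $\log(1/\epsilon)$ large enough, the interior minimizer $p^\star=\log(2\pi/\epsilon)/\beta+\tfrac12$ is admissible and the minimum equals $\exp(-(\log(2\pi/\epsilon)+\beta/2)^2/(2\beta))$. Absorbing the order-one correction into the crude margin $d(1)=1+2\beta$ and paying a small combinatorial factor for the union over $\phi$ (or over $U$ in the definition of $\eta Sch^*$) yields the stated inequality with $C_{\eta,\sigma,\rho}=1/(2\beta)=2\eta/(\sigma^2\rho^2)$.

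The main technical obstacle I anticipate is a careful justification of the differentiability of $\lambda\mapsto\theta^\lambda(\cdot)$, including the rigorous derivation of the SDE for $\xi^\lambda$ up to $t=1$, in the presence of the coefficient that blows up at $t=1$ when $\eta<\tfrac12$. This is standard smoothness-in-parameter theory for SDEs, and the representation of $\xi^\lambda(1)$ remains valid up to $t=1$ precisely because $\beta<\infty$, i.e.\ because $\eta>0$. Once that is in place, the remaining steps are tight applications of Jensen, the Gaussian MGF, and elementary optimization.
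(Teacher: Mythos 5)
Your proposal is correct, but it follows a genuinely different route from the paper. The paper reduces to $\mu=0$ by the shift invariance $\theta^{\lambda-u}(t)+ut=^d\theta^\lambda(t)$, passes to the relative phase $\alpha^\epsilon=\theta^\epsilon-\theta^0$, whose noise coefficient $\sin(\alpha/2)$ degenerates at $0$ and $2\pi$, applies the logit transform $Y=\log\tan(\alpha/4)$, and shows that on the event $\sup_t|\tilde Y(t)|\le\log(1/\epsilon)$ the drift is bounded, so $\tilde Y$ stays within $d(1)$ of a time-changed Brownian motion; the Gaussian tail of $\sup|Z|$ then gives the bound. You instead exploit monotonicity of $\lambda\mapsto\theta^\lambda(1)$ via the derivative process $\phi^\lambda=\partial_\lambda\theta^\lambda$ of \eqref{4.3.}, whose exponential-functional representation $\phi^\lambda(1)=\int_0^1\exp\bigl((N_1-N_s)-\tfrac12(\langle N\rangle_1-\langle N\rangle_s)\bigr)ds$ is exactly the one the paper derives in Lemma \ref{lemma3.76} (there written as $\int_0^t e^{r(u)-r(t)}du$), and you convert "two points in $[\mu,\mu+\epsilon]$" into $\theta^{\mu+\epsilon}(1)-\theta^\mu(1)\ge2\pi$, bounded by Jensen, the Gaussian MGF (legitimate since $\langle N^\lambda\rangle$ is deterministic, as $e^{-i\theta^\lambda}d\mathcal{W}$ has bracket $dt/2$), and Chernoff optimization; your exponent $-(\log(2\pi/\epsilon)+\beta/2)^2/(2\beta)$ with $\beta=\sigma^2\rho^2/(4\eta)$ indeed dominates the stated bound with $C_{\eta,\sigma,\rho}=2\eta/(\sigma^2\rho^2)$ when $\log(1/\epsilon)>d(1)=1+2\beta$, which also guarantees $p^\star\ge1$. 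Your route buys an explicit constant, works uniformly in $\mu$ without the shift-invariance step, and avoids the barrier/comparison argument; the paper's route stays closer to the $Sch_\tau$ analysis of Kritchevski–Valkó–Virág and sets up the relative-phase machinery reused in Propositions \ref{prop1.6} and \ref{prop1.7}. Two small remarks: the simultaneous (in $\lambda$) strict positivity of $\phi^\lambda(1)$ needed for monotonicity rests on the a.s.\ regularity of $\lambda\mapsto\theta^\lambda(1)$, which the paper itself invokes (analyticity in Lemma \ref{lemma3.76}), so your flagged obstacle is handled at the same level of rigor as the paper; and no union over $\phi$ is actually needed, since your estimate is uniform in $\phi$ and conditioning on $U$ suffices for $\eta Sch^*$.
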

For GOE, GUE and GSE, the probability on the left hand side of \eqref{largegap3.15} has order $\epsilon^{2+\beta}$ with $\beta=1,2,4$, which is much larger than the right hand side of \eqref{largegap3.15}. We also have:

    \begin{proposition}\label{prop1.6}
  ( Large gap probability). Fix $\eta\in(0,\frac{1}{2}]$. The probability that the point process $\eta Sch$ has a gap of length $\lambda$ is
$$\mathbb{P}(\eta Sch[0,\lambda]=0)=\exp(-c_\eta \frac{\lambda^2}{4\sigma^2\rho^2}(1+o(1)),$$
   where $o(1)\to 0$ for fixed $\eta,\rho,\sigma$ as $\lambda\to \infty$, and $c_\eta$ is some constant that depends only on $\eta$.
\end{proposition}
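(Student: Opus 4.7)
The plan is to adapt the gap-probability argument for the $\text{Sch}_\tau$ process in \cite{kritchevski2011scaling} to the $\eta$-dependent noise weight $(1-t)^{\eta-1/2}$. First, by the almost sure monotonicity of $\lambda\mapsto\theta^\lambda(1)$, which follows from an elementary comparison on the defining SDE (the diffusion coefficient $e^{-i\theta^{\lambda_1}}-e^{-i\theta^{\lambda_2}}$ vanishes at coincidence times while the positive drift gap $\lambda_2-\lambda_1$ forces immediate separation), and by integrating out the uniform phase $U$ in the definition $\eta Sch=\eta Sch^U$, one obtains the identity
\begin{equation*}
\mathbb{P}(\eta Sch[0,\lambda]=0)=\mathbb{E}\!\left[\left(1-\tfrac{\Delta(1)}{2\pi}\right)^+\right],\qquad \Delta(t):=\theta^\lambda(t)-\theta^0(t).
\end{equation*}
Thus the task reduces to estimating the left tail $\mathbb{P}(\Delta(1)<2\pi)$ as $\lambda$ grows.

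Subtracting the SDEs for $\theta^\lambda$ and $\theta^0$ the common $d\mathcal{B}$-term cancels and
\begin{equation*}
d\Delta=\lambda\,dt+\sigma\rho(1-t)^{\eta-1/2}\,\mathrm{Im}\!\left[(e^{-i\theta^\lambda}-e^{-i\theta^0})\,d\mathcal{W}\right],\qquad \Delta(0)=0,
\end{equation*}
whose martingale part $M$ has quadratic variation $\langle M\rangle_t=\sigma^2\rho^2\!\int_0^t(1-s)^{2\eta-1}(1-\cos\Delta(s))\,ds$, which depends only on $\Delta$ and is bounded above by $\sigma^2\rho^2/\eta$. Consequently the marginal law of $\Delta$ solves the closed one-dimensional SDE $d\Delta=\lambda\,dt+\sigma\rho(1-t)^{\eta-1/2}\sqrt{1-\cos\Delta}\,dB$. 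Writing $\Delta(1)=\lambda+M(1)$ and applying Bernstein's exponential inequality for continuous martingales with bounded quadratic variation gives the upper bound $\mathbb{P}(\Delta(1)<2\pi)\le\exp(-\eta(\lambda-2\pi)^2/(2\sigma^2\rho^2))$, which has the required form $\exp(-c_\eta\lambda^2/(4\sigma^2\rho^2)(1+o(1)))$ as $\lambda\to\infty$.

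For the matching lower bound I would apply Girsanov to this closed SDE, shifting the Brownian drift by $-\lambda/\sigma_{\mathrm{eff}}(t,\Delta)$ with $\sigma_{\mathrm{eff}}^2=\sigma^2\rho^2(1-t)^{2\eta-1}(1-\cos\Delta)$ to produce a reference measure $\mathbb{Q}$ under which $\Delta$ is a driftless martingale. Combining the Radon--Nikodym cost with a small-ball / Freidlin--Wentzell argument for the driftless process under $\mathbb{Q}$ around a deterministic profile identifies the decay rate with the variational minimum
\begin{equation*}
c_\eta\,\frac{\lambda^2}{4\sigma^2\rho^2}\;=\;\inf_{\Delta_*}\;\frac{1}{2}\!\int_0^1\!\frac{(\lambda-\dot\Delta_*(t))^2}{\sigma^2\rho^2(1-t)^{2\eta-1}(1-\cos\Delta_*(t))}\,dt,
\end{equation*}
the infimum being over smooth admissible paths $\Delta_*:[0,1]\to[0,2\pi)$ with $\Delta_*(0)=0$. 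The Euler--Lagrange equation for this problem reduces to $\cot(\Delta_*/2)=-2(1-2\eta)/(\lambda(1-t))$ to leading order, confirming that $\Delta_*$ concentrates near $\pi$ for large $\lambda$; plugging the minimizer into the action yields the constant $c_\eta>0$ depending only on $\eta$.

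The main obstacle is this last step, namely identifying the minimizer and matching the two constants. In the $\eta=\tfrac12$ case the ansatz $\Delta_*\equiv\pi$ (after a fast initial linear rise on $[0,\pi/\lambda]$ at cost $o(1)$) is an exact critical point and gives $c_{1/2}=1$, reproducing the sharp constant in \cite{kritchevski2011scaling}. For $\eta<\tfrac12$, the weight $(1-t)^{1-2\eta}$ in the action density degenerates as $t\to 1$, so the Euler--Lagrange solution develops a boundary layer near $t=1$ where $\Delta_*$ may saturate the constraint $\Delta_*<2\pi$; solving the ODE globally and then upgrading the formal variational bound to a rigorous small-ball estimate for the time-changed martingale $\int_0^{\cdot}(1-s)^{\eta-1/2}\sqrt{1-\cos\Delta}\,dB$ around the optimal profile is the technically delicate part of the proof.
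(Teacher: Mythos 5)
Your reduction of the gap probability to the left tail of the relative phase $\alpha^\lambda(1)=\theta^\lambda(1)-\theta^0(1)$ is essentially the same starting point as the paper's, and the uniform-shift identity $\mathbb{P}(\eta Sch[0,\lambda]=0)=\mathbb{E}[(1-\Delta(1)/2\pi)^+]$ (relying on a.s.\ monotonicity of $\lambda\mapsto\theta^\lambda(1)$) is a slightly cleaner version of the paper's $\epsilon$-sandwich. The one-dimensional closed SDE for $\Delta$ and its quadratic variation $\langle M\rangle_t=\sigma^2\rho^2\int_0^t(1-s)^{2\eta-1}(1-\cos\Delta)\,ds$ are also computed correctly. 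The paper's own proof at this stage simply applies the time change $v(t)=\frac{\sigma^2\rho^2}{4\eta}[1-(1-t)^{2\eta}]$ and then \emph{cites} Valk\'o--Vir\'ag Theorem~13, which for a drift profile $f$ in the SDE $d\tilde\alpha=\lambda f\,dt+2\sin(\tilde\alpha/2)\,dB$ gives a two-sided estimate $\exp(-\lambda^2\|f\|_2^2/8(1+o(1)))$; plugging in the profile produced by the time change gives $\|f\|_2^2=\frac{1}{\sigma^2\rho^2(1-\eta)}$ and hence $c_\eta=\frac{1}{2-2\eta}$.

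The substantive gap in your argument is that the Bernstein bound does \emph{not} match the sharp constant for $\eta<\tfrac12$. Bernstein with $\langle M\rangle_1\le\sigma^2\rho^2/\eta$ gives $\mathbb{P}(\Delta(1)<2\pi)\le\exp(-\eta(\lambda-2\pi)^2/(2\sigma^2\rho^2))$, i.e.\ a lower bound $c_\eta\ge 2\eta$ on the decay constant, whereas the variational/Girsanov argument you sketch (and the Valk\'o--Vir\'ag reduction the paper uses) identifies $c_\eta=\frac{1}{2-2\eta}$. These coincide at $\eta=\tfrac12$, but for all $\eta\in(0,\tfrac12)$ one has $2\eta<\frac{1}{2-2\eta}$, so your upper bound is strictly weaker than the sharp one: it "has the required form" only in the sense of being Gaussian in $\lambda$, not in matching $c_\eta$. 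The reason Bernstein is loose is that it uses the uniform bound $1-\cos\Delta\le 2$ together with the \emph{total} quadratic variation, ignoring that the time-weight $(1-t)^{2\eta-1}$ concentrates near $t=1$; the sharp upper bound must instead compare against the weighted profile, which is exactly the same Girsanov/change-of-measure machinery you invoke for the lower bound. So in the end you cannot separate the upper bound from the delicate part you yourself flag as unresolved: you need the variational argument for both directions, and your Euler--Lagrange computation (correct as far as it goes, with $\cot(\Delta_*/2)\approx -2(1-2\eta)/(\lambda(1-t))$ and a boundary layer at $t\to 1$) would have to be upgraded to a rigorous small-ball estimate. That upgrade is precisely what the paper avoids by reducing, after the time change, to the already-proved Valk\'o--Vir\'ag statement.
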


We have seen that the $\eta Sch$ point process has many similar features to the $Sch_\tau$ process and $\text{Sine}_\beta$ process, but they have qualitatively different behaviors at finer scales. When we consider central limit theorem of  $\eta Sch$, we can currently only prove the following crude upper bound. We leave the investigation of Gaussian fluctuations for $\eta Sch$ to future work.

   \begin{proposition}\label{prop1.7} Assume $\eta\in(0,\frac{1}{2}]$.
    Consider any increasing function $f:(0,\infty)\to\mathbb{R}_+$ with $\lim_{t\to\infty}f(t)=\infty$. Then as $\lambda\to\infty$, we have the convergence
$$\frac{1}{f(\lambda)}\left(\eta Sch[0,\lambda]-\frac{\lambda}{2\pi}\right)\to 0$$ in probability.
\end{proposition}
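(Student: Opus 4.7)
The plan is to reduce the counting function to the phase variable $\theta^\lambda(1)$ and to obtain a uniform-in-$\lambda$ $L^2$ bound on its fluctuation about its mean $\lambda$; Chebyshev's inequality will then yield the claim. Integrating the SDE for $\theta^\lambda$ stated just before Proposition \ref{prop1.5} on $[0,1]$ gives
\begin{equation*}
\theta^\lambda(1) - \lambda = \int_0^1 \frac{\sigma\rho\,d\mathcal{B}_t}{(1-t)^{1/2-\eta}} + \int_0^1 \frac{\sigma\rho}{(1-t)^{1/2-\eta}}\,\text{Im}\bigl[e^{-i\theta^\lambda(t)}\,d\mathcal{W}_t\bigr].
\end{equation*}
The first integral is a centered Gaussian with variance $\sigma^2\rho^2/(2\eta)$, the same random variable for every $\lambda$. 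The second is a continuous martingale in $t$ whose bracket at $t=1$ is bounded by $\sigma^2\rho^2/(4\eta)$, since $|\text{Im}(e^{-i\theta^\lambda(t)})|^2 \leq 1$. Hence $\sup_{\lambda \in \mathbb{R}} \|\theta^\lambda(1) - \lambda\|_{L^2} \leq C(\sigma,\rho,\eta) < \infty$, a bound crucially independent of $\lambda$.

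Next I would invoke monotonicity of $\lambda \mapsto \theta^\lambda(1)$. Formally differentiating the SDE in $\lambda$ yields a linear SDE of the form $d(\partial_\lambda\theta^\lambda)(t) = dt + \partial_\lambda\theta^\lambda(t)\,dN_t$ with $\partial_\lambda\theta^\lambda(0) = 0$, where $N$ is a continuous martingale depending on the path $\theta^\lambda$. Its stochastic Duhamel representation $\partial_\lambda\theta^\lambda(t) = \int_0^t \mathcal{E}(N)_t / \mathcal{E}(N)_s\,ds$ is strictly positive for $t > 0$, since the stochastic exponential $\mathcal{E}(N)$ is positive. Given monotonicity, for any fixed phase $\phi$,
\begin{equation*}
\eta Sch^\phi[0,\lambda] = \#\bigl\{k \in \mathbb{Z} : 2\pi k + \phi \in (\theta^0(1), \theta^\lambda(1)]\bigr\} = \frac{\theta^\lambda(1) - \theta^0(1)}{2\pi} + O(1),
\end{equation*}
with $|O(1)| \leq 1$; the $\eta Sch^*$ variant follows by conditioning on the independent uniform $U$.

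Combining the two steps, $\eta Sch[0,\lambda] - \lambda/(2\pi)$ has $L^2$ norm uniformly bounded in $\lambda$, so Chebyshev's inequality yields $\mathbb{P}\bigl(|\eta Sch[0,\lambda] - \lambda/(2\pi)| > \epsilon f(\lambda)\bigr) \leq C/(\epsilon^2 f(\lambda)^2) \to 0$ as $\lambda \to \infty$. The main obstacle I anticipate is the monotonicity step: one must justify carefully that $\theta^\lambda(1)$ is (pathwise) differentiable in $\lambda$ and that the derivative remains strictly positive despite the multiplicative noise. This is standard once the linear SDE is recognized, but it deserves a clean treatment. With that in hand, the remainder of the argument is elementary Itô calculus and a direct Chebyshev bound, which is why the resulting fluctuation estimate is only of the crude $o(f(\lambda))$ type rather than a sharp central limit theorem.
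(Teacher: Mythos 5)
Your argument is correct and follows essentially the same route as the paper: reduce $\eta Sch[0,\lambda]$ to the phase increment $\theta^\lambda(1)-\theta^0(1)$ up to an $O(1)$ error (the paper encodes this via the relative phase $\alpha^\lambda=\theta^\lambda-\theta^0$ and the inequality $|\tfrac{1}{2\pi}\alpha^\lambda(1)-\eta Sch[0,\lambda]|\leq 1$), bound the second moment of $\theta^\lambda(1)-\lambda$ uniformly in $\lambda$ via the finiteness of $\int_0^1(1-t)^{2\eta-1}dt$, and conclude by Chebyshev/Markov. Two small remarks: the paper's phrase that $\alpha^\lambda(1)-\lambda$ is ``Gaussian'' is loose since the bracket is random -- your $L^2$ formulation is the cleaner statement of what is actually used; and the monotonicity/Duhamel step you flag as delicate is already established in the paper (equation \eqref{4.3.} and its explicit positive solution $\phi^\lambda(t)=\int_0^t e^{r(u)-r(t)}du$ in the proof of Lemma \ref{lemma3.76}), so you may simply cite it rather than re-derive it.
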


When $\eta=0$, i.e. we consider the critical decaying model, it is proved in \cite{kritchevski2011scaling}, Theorem 17 that for the corresponding scaling limit, the $Sine_\beta$ process, we have Gaussian fluctuations
$$\frac{1}{\sqrt{\log(\lambda)}}\left(Sine_\beta[0,\lambda]-\frac{\lambda}{2\pi}\right)\Rightarrow \mathcal{N}(0,\frac{2}{\beta\pi^2}).$$

When $\eta=\frac{1}{2},$ i.e. we consider the critical vanishing model, the scaling limit $Sch_\tau$ process satisfies a more refined form of central limit theorem ( \cite{kritchevski2011scaling}, Theorem 13)
\begin{equation}
    Sch_\tau[0,2\pi k+\theta]-k\Rightarrow \lfloor \frac{\xi_0+\xi_2+\theta}{2\pi}\rfloor-\lfloor \frac{\xi_0+\xi_1}{2\pi}\rfloor
\end{equation}
with $\xi_0,\xi_1,\xi_2$ independent normals with mean $0$ and variance $\tau,\tau/2,\tau/2$. It seems hard to generalize this argument to the general case $\eta\in(0,\frac{1}{2})$. 

\subsection{Plan of the paper}
This paper is organized as follows. In Section \ref{section2}, \ref{section3} and \ref{section4} we respectively prove the scaling limits of transfer matrices, Theorem \ref{convergence2.1}, the tightness estimates, Theorem \ref{theorem3.1good}, and the process level limit, Corollary \ref{convergenceprocess}. In Section \ref{section5} we prove the shape theorem of eigenfunctions, Theorem \ref{shapetheorem}. In Section \ref{section6} we prove the properties of the point process $\eta Sch$, Proposition \ref{prop1.5}, \ref{prop1.6}, \ref{prop1.7}.

\section{Transfer matrices evolution}\label{section2}
In this section we prove Theorem \ref{convergence2.1}.
\begin{proof}
We fix the value of $\lambda\in\mathbb{C}$ and remove it from the notation. From the iteration $M_\ell=T(E+\epsilon_\ell)M_{\ell-1}$ we may write
\begin{equation}
    Q_\ell =T^{-\ell}(E)\begin{pmatrix} 1&\epsilon_\ell\\0&1\end{pmatrix} T^\ell(E)Q_{\ell-1}. 
\end{equation}
Therefore $Q_\ell,0\leq\ell\leq n$ defines a Markov process with $Q_0=I$. Rewriting $T$ for $T(E)$, we use the following matrices to diagonalize $T$:
$$
T^{-\ell}\begin{pmatrix}   0&\epsilon_\ell\\0&0\end{pmatrix}T^\ell=\frac{i\rho\epsilon_\ell}{2} Z O_\ell Z^{-1},\quad O_\ell=\begin{pmatrix}
    1&-z^{2\ell}\\ \bar{z}^{2\ell}&-1
\end{pmatrix},$$from this we see that $X_\ell:=Z^{-1}Q_\ell Z$ becomes a Markov process with $X_0=I$ with increments
\begin{equation}\label{xelldefine}
    X_\ell =X_{\ell-1}+U_\ell X_{\ell-1},\quad U_\ell=i\rho\epsilon_\ell O_\ell/2,
\end{equation}

It suffices to show the Markov process $X_\ell^n$ converges to the limiting SDE as stated in the theorem. Since $z$ is complex with $|z|=1$, $U_\ell X_{\ell}$ does not converge to a limit. But for $K$ large, $X_{\ell+K}-X_\ell=\sum_{j=1}^K U_{\ell+j}X_{\ell+j-1}$ should converge to a limiting SDE, noting that the sum $\sum_{\ell=1}^K z^{2\ell}$ is uniformly upper bounded however large $K$ is. A rigorous proof uses the convergence result of discrete time Markov chains to SDEs, stated in \cite{kritchevski2011scaling}, Proposition 26. We briefly recall the statement of this proposition: consider the Markov chain 
$$(X_\ell^n\in\mathbb{R}^d,\ell=0,\cdots,\lfloor nT\rfloor),$$
and let $Y_\ell^n(x)$ denote the expectation of $X_{\ell+1}^n-X_\ell^n$ conditioned on $X_\ell^n=x$, and consider 
$$b^n(t,x):=n\mathbb{E}Y^n_{\lfloor nt\rfloor}(x),\quad a^n(t,x):=n\mathbb{E} Y^n_{\lfloor nt\rfloor}(x)  Y^n_{\lfloor nt\rfloor}(x)^T.$$
Assume $a^n$ and $b^n$ converges to some $C^2$ functions $a$ and $b$, in the sense that
\begin{equation}\label{convergencecondition}
    \sup_{0\leq t\leq T,|x|\leq R}\|\int_0^t (a^n(s,x)-a(s,x))ds\|+\sup_{0\leq t\leq T,|x|\leq R}\|\int_0^t (b^n(s,x)-b(s,x))ds\|\to 0,
\end{equation}
and that some mild integrability and regularity assumptions are satisfied, then $(X_{\lfloor nT\rfloor}^n,0\leq t\leq T)$ converges in distribution in $D([0,T])$ to the stochastic differential equation
\begin{equation}
    dX(t)=b(t,X(t))dt+g(t,X(t))d\mathcal{B}(t),\quad X(0)=X_0,
\end{equation}
in which $g$ is a $C^2$ function with $g(t,x)g(t,x)^T=a(t,x)$ and $\mathcal{B}(t)$ a $d$-dimensional Brownian motion. In our setting of mixed vanishing-decaying potentials, the coefficient $a(t,x)$ tends to infinity as $t\to 1$ and is thus not globally Lipschitz continuous. This is not an issue as we may apply the Proposition on $[0,1-\epsilon]$ for any $\epsilon>0$.

Now we turn back to the proof of the theorem. Conditioning on $X_\ell^n=x$, $X_{\ell+1}^n-X_\ell^n$ equals
$$Y_\ell^n(x):=\left(\frac{i\lambda}{2n}-\frac{i\sigma\rho \omega_{\ell+1}}{2n^\eta(n+1-\ell)^{\frac{1}{2}-\eta}}\right) \begin{pmatrix} 1&z^{2(\ell+1)}\\-\bar{z}^{2(\ell+1)} & -1\end{pmatrix}x.$$

Then we set $b^n(t,x)$ to be
$$\frac{i\lambda}{2}\begin{pmatrix}1&z^{2(\ell+1)}\\-\bar{z}^{2(\ell+1)}&-1\end{pmatrix}x,\quad  \ell=\lfloor nt\rfloor.$$

Now we estimate the variance $a^n(t,x).$ To simplify the analysis, we identify the $2\times 2$ complex matrix $X_\ell^n$ with a vector in $\mathbb{R}^8$, and thus identify $b^n(t,x)$ with a vector in $\mathbb{R}^8$ and identify $a^n(t,x)$ as a $8\times 8$ real matrix. Then each entry of $a^n(t,x)$ should be linearly composed of $A_jA_k$, $A_j\bar{A}_k$, $\bar{A}_j\bar{A}_k$ for $j,k\in\{1,2\}^2$, given
$$A=-\frac{i\sigma\rho}{2}(\frac{n}{n+1-\ell})^{1-2\eta}\begin{pmatrix}1&z^{2(\ell+1)}\\-\bar{z}^{2(\ell+1)}& -1\end{pmatrix}x,\quad \ell=\lfloor nt\rfloor.$$

Since $\frac{1}{n}\sup_{1\leq \ell\leq n} \sum_{j=1}^\ell \frac{z^{2j}}{(1+\frac{1-j}{n})^{1-2\eta}}\to 0$ and $\frac{1}{n}\sup_{1\leq \ell\leq n} \sum_{j=1}^\ell \frac{z^{4j}}{(1+\frac{1-j}{n})^{1-2\eta}}\to 0$ as $n$ tends to infinity (we check the first estimate as the second estimate is analogous: we use the idea of Abel summation where we define $a_{j,n}=(\frac{n+1-j}{n})^{2\eta-1}$ and define $A_j=\sum_{k=1}^j z^{2k}$. Then the summation can be rewritten as $\frac{1}{n}(a_{\ell,n} A_\ell-\sum_{j=1}^{\ell-1} (a_{j+1,n}-a_{j,n})A_j)$. Then one can check that $|A_j|\leq\frac{2}{{|z^2-1
|}}$ and $\sum_{j=1}^{\ell-1}|a_{j+1,n}-a_{j,n}|=O(n^{1-2\eta})$, the latter can be checked from the elementary estimate $|a_{j+1,n}-a_{j,n}|=O(\frac{1}{n}\cdot (\frac{n+1-j}{n})^{2\eta-2})$.) Then the limit $a(t,x)$ and $b(t,x)$ are obtained from $a^n$ and $b^n$ by taking $n\to\infty$ and replacing all the powers of $z$ and $\bar{z}$ by $0$. The convergence \eqref{convergencecondition} then follows.

Now we can verify that the limiting SDE should be
\begin{equation}\label{noisecoefficient}
    dX=\begin{pmatrix} i\lambda/2&0\\0&-i\lambda/2\end{pmatrix} X dt+\frac{\sigma\rho}{2}\frac{1}{(1-t)^{\frac{1}{2}-\eta}}\begin{pmatrix}
        id\mathcal{B}& d\mathcal{W}\\d\bar{\mathcal{W}}&-id\mathcal{B} \end{pmatrix} X,\quad X(0)=I.
\end{equation}
The drift of this SDE agrees with $b(t,x)$. For the variance, consider the random vector 
$$-\frac{\sigma\rho}{2}\begin{pmatrix}iB&W\\\bar{W}&-iB\end{pmatrix} x$$ with $B$ and $W$ having standard real and complex normal distribution. Thus the variance of the SDE agrees with that of $a(t,x)$. This finishes the proof.

In the case $E=0$, we take $z=i$ so $z^{4j}=1$ and the SDE limit is
\begin{equation}
    dX=\begin{pmatrix} i\lambda/2&0\\0&-i\lambda/2\end{pmatrix} X dt+\frac{\sigma\rho}{2}\frac{1}{(1-t)^{\frac{1}{2}-\eta}}\begin{pmatrix}
        id\mathcal{B}_1& id\mathcal{B}_2\\-id\mathcal{B}_2&-id\mathcal{B}_1 \end{pmatrix} X,\quad X(0)=I.
\end{equation}
\end{proof}

\section{Tightness estimates}\label{section3}

In this section we prove Theorem \ref{theorem3.1good}.

\begin{proof}
    We essentially follow the steps of \cite{kritchevski2011scaling}, Theorem 1 and \cite{rifkind2018eigenvectors}, Lemma 4.1. The main adaptation follows from the following: for any $n>0,$ 
    \begin{equation}
        \label{usefulhaha}
   \prod_{i=1}^n(1+\frac{1}{n^{2\eta}(n+1-i)^{1-2\eta}})\leq C_\eta<\infty\end{equation}
    for some constant $C_\eta>0$. Indeed, we take the logarithm of the left hand side and use
    $$\log(1+\frac{1}{n^{2\eta}(n+1-i)^{1-2\eta}})\leq \frac{1}{n^{2\eta}(n+1-i)^{1-2\eta}}$$
    and
    \begin{equation}
        \label{takingintoaccount}
    \sum_{i=1}^n \frac{1}{i^{1-2\eta}}\leq 1+\int_1^n \frac{1}{x^{1-2\eta}}dx= \frac{n^{2\eta}-1}{2\eta}+1.\end{equation}

    The proof of this theorem follows from the following claim: we can find a function $f$ continuous and bounded on $(-2,2)$ so that
    \begin{equation}
        \label{usefulmaximal}
   \sup_n \max_{0\leq\ell\leq n}\mathbb{E}\|M_n(E,\ell)-I\|^3<f(E),\quad E\in(-2,2).\end{equation} Once this is shown, \eqref{eigenvectorupper} follows from the upper bound on number of eigenvalues by transfer matrices norms, see Theorem 6.1 of \cite{rifkind2018eigenvectors}. Estimate \eqref{eigenpairups} also follows from standard, deterministic  arguments.

Now we show \eqref{usefulmaximal}. By the diagonalization \eqref{Ediagonalized}, we see that  $\|T^{-\ell}(E)\|\leq f(E)<\infty$ uniformly in $\ell\geq 1,$ so it suffices to prove 
 \begin{equation}
   \sup_n \max_{0\leq\ell\leq n}\mathbb{E}\|X_\ell-I\|^3<f(E),\quad E\in(-2,2),\end{equation}
where $X_\ell$ is defined in \eqref{xelldefine} and is a Markov chain.

By the recursion formula \eqref{xelldefine}, $X_k$ forms a martingale. We use the Burkholder-Davis-Gundy inequality and Doob’s Decomposition to expand the squares:
we have for any $\ell\geq 1,$
$$\begin{aligned}\mathbb{E}&\max_{k\leq\ell}\|X_k-I\|^3\leq c_2\mathbb{E}(\sum_{k=1}^\ell \mathbb{E}[\|X_k-X_{k-1}\|^2\mid\mathcal{F}_{k-1}])^{3/2}
&\\&\leq c\rho(E)^3\frac{1}{n}\mathbb{E}\sum_{k=1}^\ell \|X_{k-1}\|^3(\frac{n}{n+1-
k})^{1-2\eta},\end{aligned}$$ where in the last line we used the variance of the potentials $\epsilon_\ell$ and used Jensen's inequality.
Now using the inequality  $\|A+B\|^p\leq 2^p(\|A\|^p+\|B\|^p),$
we further estimate, taking into account \eqref{takingintoaccount},
\begin{equation}\label{mainests}\begin{aligned}
\mathbb{E}\max_{k\leq\ell}\|X_k-I\|^3&\leq c\rho(E)^3\left[\frac{1}{n}\sum_{k=1}^\ell (\frac{n}{n+1-
k})^{1-2\eta} +\frac{1}{n}\sum_{k=1}^\ell \|X_{k-1}-I\|^3 (\frac{n}{n+1-
k})^{1-2\eta}\right]\\
&\leq c'\rho(E)^3(1+\frac{S_{\ell-1}}{n}),
\end{aligned}\end{equation}
where we define the weighted sum $S_{\ell}=\sum_{k=1}^\ell \mathbb{E}\|X_k-I\|^3(\frac{n}{n-k})^{1-2\eta}$.

Now rewriting the previous expression we have $$(\frac{n-\ell}{n})^{1-2\eta}(S_\ell-S_{\ell-1})=\mathbb{E}\|X_\ell-I\|^3\leq c'\rho(E)^3(1+\frac{S_{\ell-1}}{n}).$$

Considering $R_\ell=1+\frac{S_\ell}{n}$, we see that 
$$R_\ell-  R_{\ell-1}\leq n^{-2\eta} (n-\ell)^{2\eta-1}c'\rho(E)^3R_{\ell-1}.$$ Rearranging the expression, we have 
$$
R_\ell\leq (1+c'\rho(E)^3n^{-2\eta}(n-\ell)^{2\eta-1})R_{\ell-1}.
$$
Now we complete the proof: from \eqref{usefulhaha} we conclude that $R_\ell\leq C_\eta<\infty$ uniformly in $\ell=1,\cdots,n-1$ and in $n\geq 0$. Then by definition of $R_\ell$ and \eqref{mainests} we deduce 
$$\mathbb{E}\max_{k\leq\ell}\|X_k-I\|^3\leq c_\eta\rho(E)^3<\infty.$$
    
\end{proof}

\section{Process level convergence and SDEs}\label{section4}

The proof of corollary \ref{convergenceprocess} is similar to Corollary 3 of \cite{kritchevski2011scaling} and Theorem 2.1 of \cite{rifkind2018eigenvectors}, so we only give a sketch:
\begin{proof}(Sketch) 
For the proof, 
three main arguments are used: (1) the uniform upper bound of transfer matrices with high probability, which in this paper is derived in Theorem \ref{theorem3.1good}; (2) the convergence of $Q_n$ to $Q$ for fixed and finite-dimensional $\lambda$, which in this paper is derived in Theorem \eqref{convergence2.1}, and (3) properties of analytic functions, as our transfer matrices are analytic in $\lambda$. The main difference from Theorem 2.1 of \cite{rifkind2018eigenvectors} is we only have convergence for the uniform topology on $\mathbb{C}\times[0,1-\epsilon]$ rather than $\mathbb{C}\times[0,1]$. This is because the same restriction has been posed for the convergence to $Q^\lambda$ in Theorem \ref{convergence2.1}.

More precisely, to show $M_{n_j}^\lambda$ converges in distribution to $\widetilde{T}Q^\lambda(1)$, we use tightness (Theorem \ref{theorem3.1good}) to extract a further subsequence of integers $\{n_{j_k}\}$ and a limit $\widetilde{Q}^\lambda$ so that $Q_{n_{j_k}}^\lambda\to_{law} \widetilde{Q}^\lambda$. Since $T^{n_j}\to \widetilde{T}$, this implies $M^\lambda_{n_{j_k}}\to \widetilde{T}\widetilde{Q}^\lambda$. To identify the limit point $\widetilde{Q}^\lambda$ with $\lim_{t\uparrow 1}Q^\lambda(t)$, we use three things: (A) Theorem \eqref{convergence2.1} which gives the convergence of the process $Q^\lambda_{n_{j_k}t}$ to $Q^\lambda(t)$ at any time $t<1$, (B) the fact that $\lim_{t\uparrow 1}Q^\lambda(t)$ exists almost surely: this follows from computing the quadratic variation of the SDE solved by $Q^\lambda(t)$, and (C) the fact that $\lim_{t\uparrow 1}Q^\lambda_{n_{j_k}t}$ converges to a unique limit $Q^\lambda_{n_{j_k}}$ almost surely: this fact can be checked by working through the proof of Theorem \ref{theorem3.1good} where we compute the martingale difference and estimate the moment of $Q_{n_{j_k}t}^\lambda-Q_{n_{j_k}}^\lambda$: the latter computation is very similar to the tightness proof and is therefore omitted for simplicity.
\end{proof}
Then we characterize the evolution of $(r^\lambda,\theta^\lambda)$ in terms of SDEs with parameter $\lambda$. Recall that $(r^\lambda,\theta^\lambda)$ are defined in \eqref{rlambdathetalambda}.

\begin{proposition}\label{whatisthesde?}
    $r^\lambda$ and $\theta^\lambda$ satisfies the following set of stochastic differential equations:

\begin{equation}\label{thetalambda}
    d\theta^\lambda(t)=\lambda dt+\frac{\sigma\rho}{(1-t)^{\frac{1}{2}-\eta}}[d\mathcal{B}+\text{Im}[e^{-i\theta^\lambda(t)}d\mathcal{W}]],\quad \theta^\lambda(0)=0,
\end{equation}

\begin{equation}
dr^\lambda(t)=\frac{\sigma^2\rho^2}{4(1-t)^{1-2\eta}}dt+\frac{\sigma\rho}{(1-t)^{\frac{1}{2}-\eta}}\text{Re}[e^{-i\theta^\lambda(t)}d\mathcal{W}],\quad r^\lambda(0)=0,
\end{equation}
given $\mathcal{B}$ and $\mathcal{W}$ independent standard real and complex Brownian motions.

Defining $\phi^\lambda(t):=\frac{\partial\theta^\lambda(t)}{\partial\lambda}$, then $\phi^\lambda$ solves
\begin{equation}\label{4.3.}
d\phi^\lambda(t)=dt-\frac{\sigma\rho}{(1-t)^{\frac{1}{2}-\eta}}\text{Re}[e^{-i\theta^\lambda(t)}d\mathcal{W}]\phi^\lambda(t).
\end{equation}
\end{proposition}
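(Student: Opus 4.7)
The plan is to start from the SDE for $Q^\lambda$ supplied by Theorem \ref{convergence2.1}, project to the first column through $Z^{-1}$ to get an SDE for the pair $(q^\lambda,\overline{q^\lambda})$, then apply a complex It\^o calculus to $(q^\lambda)^2 = e^{r^\lambda+i\theta^\lambda}$ and split into real and imaginary parts; finally, differentiate the resulting equation for $\theta^\lambda$ in the parameter $\lambda$ to obtain the equation for $\phi^\lambda$.

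More concretely, multiplying the SDE \eqref{noisecoefficientofQ} on the left by $Z^{-1}$ and on the right by the vector $(1,0)^t$, definition \eqref{nicestory} gives
\begin{equation*}
dq^\lambda = \tfrac{1}{2}\Big(i\lambda\, q^\lambda\,dt + \tfrac{\sigma\rho}{(1-t)^{1/2-\eta}}\bigl(i q^\lambda\,d\mathcal{B} + \overline{q^\lambda}\,d\mathcal{W}\bigr)\Big).
\end{equation*}
Write $\alpha(t) = \sigma\rho\,(1-t)^{\eta-1/2}$ for brevity. Since $d\mathcal{W}\,d\mathcal{W} = 0$, $d\mathcal{W}\,d\overline{\mathcal{W}} = dt$, $d\mathcal{B}^2 = dt$, and $\mathcal{B},\mathcal{W}$ are independent, a direct computation yields the brackets $d[q^\lambda,q^\lambda] = -\tfrac{\alpha^2}{4}(q^\lambda)^2\,dt$ and $d[q^\lambda,\overline{q^\lambda}] = \tfrac{\alpha^2}{2}|q^\lambda|^2\,dt$. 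Applying It\^o to the holomorphic map $w\mapsto w^2$ gives an SDE for $u := (q^\lambda)^2$; applying it once more to $\log u = r^\lambda + i\theta^\lambda$, the Stratonovich-like correction $-\tfrac{1}{2}(du/u)^2 = \tfrac{\alpha^2}{2}\,dt$ combines with the drift from the first step to produce
\begin{equation*}
d(r^\lambda + i\theta^\lambda) = i\lambda\,dt + \tfrac{\alpha^2}{4}\,dt + i\alpha\,d\mathcal{B} + \alpha\,e^{-i\theta^\lambda}d\mathcal{W}.
\end{equation*}
Taking imaginary and real parts and using $|q^\lambda|^2 = u\,e^{-i\theta^\lambda}$ gives exactly the stated equations for $\theta^\lambda$ and $r^\lambda$. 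The main care needed here is bookkeeping of the complex It\^o correction and the identification of the noise with the standard pair $(\mathcal{B},\mathcal{W})$, which is straightforward once one notes that $\alpha\,e^{-i\theta^\lambda}d\mathcal{W}$ has the same law, increment by increment, as $\alpha\,d\mathcal{W}$ conditional on the filtration.

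For the final equation \eqref{4.3.}, I would formally differentiate \eqref{thetalambda} in $\lambda$: the drift gives $dt$ and the noise term $\alpha\,\operatorname{Im}[e^{-i\theta^\lambda}d\mathcal{W}]$ contributes $-\alpha\,\operatorname{Im}[i e^{-i\theta^\lambda}d\mathcal{W}]\phi^\lambda = -\alpha\,\operatorname{Re}[e^{-i\theta^\lambda}d\mathcal{W}]\phi^\lambda$, matching the stated SDE. The step that requires a bit of justification is the interchange $\partial_\lambda\theta^\lambda$ inside an It\^o integral; this is standard from smooth dependence of SDE solutions on a real parameter (the coefficients are smooth in $\theta$ with bounded derivatives), and can equivalently be obtained by setting up a coupled SDE for $(\theta^\lambda,\phi^\lambda)$ and appealing to uniqueness. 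The main technical obstacle is therefore mostly notational: tracking real-versus-imaginary parts and the complex covariation correctly; once that is handled, each equation falls out by direct It\^o expansion.
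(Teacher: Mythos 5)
Your proposal is correct and follows essentially the same route as the paper: projecting the matrix SDE \eqref{noisecoefficientofQ} through $Z^{-1}$ to get the scalar SDE for $q^\lambda$, applying It\^o's formula to the complex logarithm (your detour through $u=(q^\lambda)^2$ and then $\log u$ is equivalent to the paper's direct computation of $d\log q$ followed by $r+i\theta=2\log q$), and differentiating the $\theta^\lambda$ equation in $\lambda$ to obtain the equation for $\phi^\lambda$. The bracket computations and It\^o corrections check out.
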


\begin{proof} Consider $X(\lambda,t)=Z^{-1}Q(\lambda,t)$, then from \eqref{noisecoefficientofQ} we see that
     $X_{11}$ solves the following SDE
    $$
dX_{11}(\lambda,t)=\frac{i\lambda}{2}X_{11}(\lambda,t)dt+\frac{\sigma\rho}{2(1-t)^{\frac{1}{2}-\eta}}[iX_{11}(\lambda,t)d\mathcal{B}+X_{21}(\lambda,t)d\mathcal{W}].
    $$
    From \eqref{nicestory}, we see that $q^\lambda(t)=X(\lambda,t)_{11}$ and that $\bar{q}^\lambda(t)=X(\lambda,t)_{(21)}$ so $q$ solves 
    $$dq=\frac{i\lambda}{2}qdt+\frac{\sigma\rho}{2(1-t)^{\frac{1}{2}-\eta}}(iqd\mathcal{B}+\bar{q}d\mathcal{W}),\quad q(0)=1.$$
    From Ito's formula, regarding $q$ as a two-dimensional real vector,
    $$\begin{aligned}
d\log q=&\frac{dq}{q}-\frac{1}{2}\frac{(dq)^2}{q^2}\\
=&\frac{i \lambda}{2}dt +\frac{\sigma\rho}{2(1-t)^{\frac{1}{2}-\eta}}[id\mathcal{B}+\frac{\bar{q}}{q}d\mathcal{W}]+\frac{\sigma^2\rho^2}{8(1-t)^{1-2\eta}}dt.
 \end{aligned}   $$

 From the defining relation $r=2\text{Re}\log q$ and $\theta=2\text{Im}\log q$, we see they satisfy the following SDEs
$$ dr=\text{Re}(\frac{\bar{q}}{q}d\mathcal{W})\frac{\sigma\rho}{(1-t)^{\frac{1}{2}-\eta}}+\frac{\sigma^2\rho^2}{4(1-t)^{1-2\eta}}dt$$
 and 
 $$d\theta=\lambda dt+\frac{\sigma\rho}{(1-t)^{\frac{1}{2}-\eta}}[d\mathcal{B}+\text{Im}(\frac{\bar{q}}{q}d\mathcal{W})].$$
 Finally use $\frac{\bar{q}}{q}=\exp(-i\theta).$ The proof of \eqref{4.3.} follows from differentiating both sides of \eqref{thetalambda} with respect to $\lambda$.
\end{proof}

Then the proof of Corollary \ref{corollary1.4} directly follows.

\begin{proof}[\proofname\ of Corollary \ref{corollary1.4}]For this proof only, we use a slightly different diagonalization of $T(E)$: we take $T(E)=\hat{Z}D\hat{Z}^{-1}$ with 
\begin{equation}
    D=\begin{pmatrix}
        \bar{z}&0\\0&z
\end{pmatrix},\quad \hat{Z}=\begin{pmatrix}
    \bar{z}&-z\\1&-1
\end{pmatrix},\quad z=\frac{E}{2}+i\sqrt{1-\frac{E^2}{4}}.
\end{equation}

We first assume that $z^{n_j+1}$ converges to $e^{i\theta}$.
Then $T^{n_j}(E)$ converges to a matrix $\tilde{T}$ with
$$
\tilde{T}\hat{Z}=\hat{Z}\lim\begin{pmatrix}\bar{z}^{n_j}&0\\0&z^{n_j}\end{pmatrix}=\begin{pmatrix}
\bar{z}&-z\\1&-1
\end{pmatrix}
\begin{pmatrix}
ze^{-i\theta}&0\\0&\bar{z}e^{i\theta}
\end{pmatrix}=\begin{pmatrix}
    e^{-i\theta}&-e^{i\theta}\\ze^{-i\theta}&-\bar{z}e^{i\theta}
\end{pmatrix}.
$$
By Corollary \ref{convergenceprocess}, item (2), we have to identify the zeros of 
$$
[\tilde{T}Q^\lambda(1)]_{11}=[\tilde{T}\hat{Z}\tilde{X}^\lambda(1)\hat{Z}^{-1}]_{11}=[\tilde{T}\hat{Z}X^\lambda(1)]_{11}.
$$Here $\tilde{X}^\lambda$ solves the SDE \eqref{noisecoefficient} (where we use that $Z=\hat{Z}\frac{i\rho(E)}{2}$ so $Z\tilde{X}^\lambda(1)Z^{-1}=\hat{Z}\tilde{X}^\lambda(1)\hat{Z}^{-1}$) and thus by linearity, $X^\lambda:=\tilde{X}^\lambda \hat{Z}^{-1}$ solves the same SDE with initial condition $X^\lambda(0)=\hat{Z}^{-1}$.
We can verify that for a real $\lambda$ and any $t>0$, the solution $X^\lambda(t)$ has the form $\begin{bmatrix} a&b\\ -\bar{a}&-\bar{b}
\end{bmatrix}$, this is because the initial condition has this form and the SDE preserves solutions of this form. 
Then we can check that 
\begin{equation}
    [\tilde{T}\hat{Z}X^\lambda(1)]_{11}=e^{-i\theta}X^\lambda(1)_{11}-e^{i\theta}X^\lambda(1)_{21}=2\Re[e^{-i\theta}X^\lambda(1)_{11}].
\end{equation}

Next we rewrite the expression of the SDE solved by matrix entries:
\begin{equation}
   2dX_{11}=i\lambda X_{11}dt+\frac{\sigma\rho}{(1-t)^{\frac{1}{2}-\eta}}[i X_{11}d\mathcal{B}+\bar{X}_{11}d\mathcal{W}],\quad X_{11}(0)=i\rho/2, 
\end{equation}
\begin{equation}
    2dX_{12}=i\lambda X_{12}dt+\frac{\sigma\rho}{(1-t)^{\frac{1}{2}-\eta}}[iX_{12}d\mathcal{B}+\bar{X}_{12}d\mathcal{W}],\quad X_{12}(0)=-iz\rho/2,
\end{equation} Via applying Itô formula we can verify that 
$$
d\det \tilde{X}^\lambda=0,\quad \text{and}\quad 2i\Im[X_{11},\tilde{X}_{   12}]=\det X^\lambda(t)=\det Z^{-1}\neq 0.
$$Thus $X_{11}^\lambda(t)$ is never equal to 0.
   We define the phase function $\varphi^\lambda(t)$ via
    $$
e^{i\varphi^\lambda(t)}=\frac{iX_{11}^\lambda(t)}{\overline{iX_{11}^\lambda(t)}},\quad \varphi^\lambda(0)=0,$$ then the phase function is well-defined and we can apply Itô's formula and get that $\varphi^\lambda(t)$ satisfies the SDE \eqref{dphilambdat} with $-d\mathcal{W}$ replaced by $d\mathcal{W}$.

Moreover, we have that the zeros of $[\tilde{T}\hat{Z}X^\lambda(1)]_{11}$ correspond to the solutions to $$
\Re[e^{-i\theta-i\pi/2}iX^\lambda(1)_{11}]=0,
$$ which is equivalently $-\theta-\pi/2+\varphi^\lambda(1)/2\in\pi\mathbb{Z}$. To sum up, we have verified that if $z^{n_j+1}\to e^{i\theta}$, then we have the convergence in law of  
$$
\Lambda_{n_j}\Rightarrow  \{\lambda:\varphi^{\lambda}(1)\in 2\theta+\pi+2\pi\mathbb{Z}\}=\eta Sch^{2\theta+\pi}.$$
Since $2\arg(z^{n+1})-\arg(z^{2n+2})$ is either 0 or $2\pi$ and $\operatorname\eta{Sch}^{2\theta+\pi}=^d \eta{Sch}+2\theta+\pi$, this verifies the statement of the corollary.
\end{proof}

\section{Shape of eigenfunctions}\label{section5}

We separate the proof of Theorem \ref{shapetheorem} into three subsections. The first contains some preparatory lemmas. The second proves scaling limit at microscopic scales of joint eigenvalue eigenvector pairs near a bulk energy $E$. In the third subsection we upgrade the microscopic scaling limit into global scaling limit.

\subsection{Technical preparations}
\begin{lemma}\label{lema5.1}
Given $0<|E|<2$, consider $\boldsymbol{m}_n^\lambda,\boldsymbol{q}^\lambda$ to be measures on $[0,1]$ having densities
$$d\boldsymbol{m}_n^\lambda(t)=|((2/\rho(E))M_{n,E}(\lambda,\lfloor nt\rfloor)_{11}|^2dt
$$
$$d\boldsymbol{q}^\lambda(t)=|q^\lambda(t)|^2dt.
$$
Given any sequence $n_j$ such that $z^{n_j+1}\to z'$, we have the convergence in law
$$
\{(\lambda,\boldsymbol{m}_n^\lambda):\lambda\in \Lambda_{n_j,E}\}\Rightarrow \{(\lambda,2\boldsymbol{q}^\lambda):\lambda\in \eta Sch^{2\arg z'}\}.
$$
\end{lemma}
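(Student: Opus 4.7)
The plan is to express the density of $\boldsymbol{m}_n^\lambda$ in terms of the Prüfer variable $q^\lambda$ and decompose it into a slowly varying piece plus an oscillating error. Starting from $M_\ell^\lambda = T^\ell(E)Q_\ell^\lambda$, the diagonalization \eqref{Ediagonalized} together with the relation \eqref{nicestory} gives
$$\tfrac{2}{\rho(E)}(M_\ell^\lambda)_{11} = 2\,\text{Im}\bigl(z^{\ell+1}\,\overline{q^\lambda(\ell/n)}\bigr),$$
so squaring via $|2\,\text{Im}\,w|^2 = 2|w|^2 - 2\,\text{Re}(w^2)$ produces
$$\Bigl|\tfrac{2}{\rho(E)}(M_\ell^\lambda)_{11}\Bigr|^2 = 2|q^\lambda(\ell/n)|^2 \;-\; 2\,\text{Re}\bigl(z^{2(\ell+1)}\,\overline{q^\lambda(\ell/n)}^{\,2}\bigr).$$
Tested against any $\varphi \in C_b([0,1])$, the first term will converge to $2\int \varphi\,d\boldsymbol{q}^\lambda$ by the uniform convergence $Q_n(\lambda,\cdot) \to Q(\lambda,\cdot)$ on $[0,1-\epsilon]$ from Corollary \ref{convergenceprocess}.

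The key analytic step will be to show that the oscillating second term vanishes in probability. Since $0 < |E| < 2$ one has $z^2 \neq 1$, so the geometric partial sums $\sum_\ell z^{2\ell}$ are bounded uniformly in $n$. A discrete Abel summation will then give
$$\frac{1}{n}\sum_{\ell=0}^{\lfloor (1-\epsilon)n\rfloor} \varphi(\ell/n)\,z^{2(\ell+1)}\,\overline{q^\lambda(\ell/n)}^{\,2} \;\longrightarrow\; 0$$
in probability, using that $q^\lambda$ is a.s.\ continuous with tight modulus of continuity on $[0,1-\epsilon]$ by Theorem \ref{convergence2.1}. The contribution from $\ell \in [(1-\epsilon)n, n]$ will be bounded by $\epsilon \cdot \sup_\ell \text{Tr}(M_\ell^\lambda M_\ell^{\lambda*})$, which Theorem \ref{theorem3.1good}(1a) controls uniformly in $n$ on a high-probability event and therefore can be sent to $0$ by taking $\epsilon \to 0$.

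Joint convergence with the point process then follows from the continuous mapping theorem: Corollary \ref{convergenceprocess} supplies joint convergence of $Q_n(\lambda,\cdot)$ and $\Lambda_{n_j,E}$ to $(Q(\lambda,\cdot), \text{ zeros of }[\tilde T Q(\lambda,1)]_{11})$, and the functional sending $Q_n(\lambda,\cdot)$ to $\boldsymbol{m}_n^\lambda$ becomes continuous modulo the oscillation analysis above. To identify the limit point process as $\tau-Sch^{2\arg z'}$, applying the same computation with $\ell = n_j$ and $z^{n_j+1} \to z'$ yields $[\tilde T Q(\lambda,1)]_{11} = \rho(E)\,\text{Im}\bigl(z'\,\overline{q^\lambda(1)}\bigr)$, which vanishes precisely when $\theta^\lambda(1) \in 2\pi\mathbb{Z} + 2\arg z'$, using $\theta^\lambda = 2\arg q^\lambda$ from \eqref{rlambdathetalambda}; this matches the definition of $\eta Sch^{2\arg z'}$.

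The main obstacle will be the degenerate endpoint at $t=1$, since Theorem \ref{convergence2.1} only provides convergence on $[0,1-\epsilon]$ and the SDE \eqref{noisecoefficientofQ} has singular diffusion there. All three ingredients --- the measure convergence, the Riemann-Lebesgue cancellation, and the identification of the point process through $q^\lambda(1)$ --- must therefore be established uniformly on $[0,1-\epsilon]$ and then closed by letting $\epsilon \to 0$, which requires the tightness estimates of Theorem \ref{theorem3.1good} together with moment bounds on $q^\lambda(1)$ obtained from the SDEs of Proposition \ref{whatisthesde?}.
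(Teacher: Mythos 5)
Your proposal is correct and takes essentially the same route as the paper, which simply invokes Lemma 3.4 of Rifkind--Virag for this statement: the Prüfer identity $\tfrac{2}{\rho(E)}(M_\ell^\lambda)_{11}=2\,\mathrm{Im}\bigl(z^{\ell+1}\overline{q^\lambda(\ell/n)}\bigr)$, the split into $2|q^\lambda|^2$ plus an oscillatory term killed by summation by parts (using $z^2\neq 1$ for $0<|E|<2$), the use of Corollary \ref{convergenceprocess} for joint convergence, and the identification of the limit zero set through $\theta^\lambda=2\arg q^\lambda$ are exactly the ingredients of that cited argument. The endpoint issue at $t=1$ that you flag is treated the same way there, via the tightness estimates and the fact that for $\eta>0$ the quadratic variation of the SDE is finite up to $t=1$, so $Q^\lambda(1)$ and $q^\lambda(1)$ are well defined; there is no gap.
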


 The proof of this lemma is the same as in \cite{rifkind2018eigenvectors}, Lemma 3.4, so we omit the details.

\begin{lemma}\label{lemma3.6!}
    Given $U$ a uniform distribution on $[0,2\pi]$, we have for any $\phi\in\mathbb{R}$,
$$
\{(\lambda+U,\boldsymbol{q}^\lambda):\lambda\in\eta Sch^\phi\}=^d \{(\lambda,\boldsymbol{q}^\lambda):\lambda\in \eta Sch^*\}.$$
\end{lemma}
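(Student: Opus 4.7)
The plan is to trade the random shift $U$ of the argument $\lambda$ for a random shift $U$ of the phase $\phi$, via a gauge-type transformation of the SDE system of Proposition~\ref{whatisthesde?}. Concretely, define for each $\mu\in\mathbb{R}$
\[
\hat\theta^\mu(t) := \theta^{\mu-U}(t)+Ut,\qquad \hat r^\mu(t):=r^{\mu-U}(t),
\]
and write $\hat{\boldsymbol{q}}^\mu$ for the measure on $[0,1]$ with density $e^{\hat r^\mu(t)}=|q^{\mu-U}(t)|^2$. After the change of variable $\mu=\lambda+U$ the left-hand side becomes
\[
\{(\mu,\ \hat{\boldsymbol{q}}^\mu)\ :\ \hat\theta^\mu(1)\in 2\pi\mathbb{Z}+(\phi+U)\},
\]
since the constraint $\theta^\lambda(1)\in 2\pi\mathbb{Z}+\phi$ is equivalent to $\hat\theta^\mu(1)-U\in 2\pi\mathbb{Z}+\phi$.

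Next I would check that $\hat\theta^\mu$ and $\hat r^\mu$ solve the same SDEs as $\theta^\mu$ and $r^\mu$ do in Proposition~\ref{whatisthesde?}, but driven by $\mathcal{B}$ together with the complex process
\[
\hat{\mathcal{W}}(t):=\int_0^t e^{iUs}\,d\mathcal{W}(s).
\]
Indeed, using $e^{-i\theta^{\mu-U}(t)}=e^{-i\hat\theta^\mu(t)}e^{iUt}$, each occurrence of $e^{-i\theta^{\mu-U}}\,d\mathcal{W}$ collapses to $e^{-i\hat\theta^\mu}\,d\hat{\mathcal{W}}$, while the extra drift $U\,dt$ from differentiating $Ut$ exactly cancels the $-U$ in $(\mu-U)\,dt$. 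The essential observation is that $(\mathcal{B},\hat{\mathcal{W}})$ is independent of $U$ and is distributed as $(\mathcal{B},\mathcal{W})$: conditionally on $U=u$, the integrand $e^{ius}$ is a deterministic function of modulus one, so Lévy's characterization identifies $\hat{\mathcal{W}}$ as a standard complex Brownian motion independent of $\mathcal{B}$; since this conditional law does not depend on $u$, independence from $U$ follows.

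Consequently $(\hat\theta^\mu,\hat r^\mu)_\mu =^d (\theta^\mu,r^\mu)_\mu$ and is independent of $U$, so $\tilde U:=(\phi+U)\bmod 2\pi$ is uniformly distributed on $[0,2\pi]$ and independent of $((\hat\theta^\mu)_\mu,(\hat r^\mu)_\mu)$. The distributional identity
\[
\mathrm{LHS}=^d\{(\mu,\boldsymbol{q}^\mu)\ :\ \theta^\mu(1)\in 2\pi\mathbb{Z}+\tilde U\}=\{(\mu,\boldsymbol{q}^\mu)\ :\ \mu\in\eta Sch^{\tilde U}\}=^d\{(\mu,\boldsymbol{q}^\mu)\ :\ \mu\in\eta Sch^*\}
\]
then finishes the proof. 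The only non-cosmetic ingredient is the independence of $(\mathcal{B},\hat{\mathcal{W}})$ from $U$ together with the identification of its law; this is the main technical point, but is immediate once one observes that the modulating factor $e^{iUs}$ is $\sigma(U)$-measurable at $s=0$ and of unit modulus, so conditioning on $U$ reduces $\hat{\mathcal{W}}$ to an Itô integral against $\mathcal{W}$ with a deterministic unit-modulus integrand.
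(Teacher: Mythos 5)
Your proof is correct and follows essentially the same route as the paper's: both absorb the shift into the gauge transformation $\theta^{\lambda-u}(t)+ut$, use the rotation invariance $e^{iut}\,d\mathcal{W}=^d d\mathcal{W}$ together with uniqueness of the SDE solutions, and conclude from the fact that $(\phi+U)\bmod 2\pi$ is again uniform. The only difference is cosmetic: you work with the random $U$ directly and justify independence of $(\mathcal{B},\hat{\mathcal{W}})$ from $U$ by conditioning, whereas the paper proves the identity for each fixed $u$ and randomizes at the end.
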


\begin{proof}
Since $r^\lambda=2\log|q^\lambda|,$ we will show 
$$
\{(\lambda+U,r^\lambda):\lambda\in \eta Sch^\phi\}=^d \{(\lambda,r^\lambda):\lambda\in \eta Sch^*.
$$
Recall that $\theta^\lambda$ and $r^\lambda$ solve the SDE
\begin{equation}
    d\theta^\lambda(t)=\lambda dt+\frac{\sigma\rho}{(1-t)^{\frac{1}{2}-\eta}}[d\mathcal{B}+\text{Im}[e^{-i\theta^\lambda(t)}d\mathcal{W}]],\quad \theta^\lambda(0)=0,
\end{equation}

\begin{equation}
dr^\lambda(t)=\frac{\sigma^2\rho^2}{4(1-t)^{1-2\eta}}dt+\frac{\sigma\rho}{(1-t)^{\frac{1}{2}-\eta}}\text{Re}[e^{-i\theta^\lambda(t)}d\mathcal{W}],\quad r^\lambda(0)=0.
\end{equation}
To understand the effect of shifting by $u$, define $\tilde{\theta}^\lambda(t):=\theta^{\lambda-u}(t)+ut$ and $\tilde{r}^\lambda(t):=r^{\lambda-u}(t)$, and notice that $\tilde{\theta}$ and $\tilde{r}$ solve the same SDE with $d\widetilde{\mathcal{B}}=d\mathcal{B}$ and $d\widetilde{\mathcal{W}}=e^{iut}d\mathcal{W}.$ 
    Now since $\widetilde{\mathcal{W}}$ has the same law as $\mathcal{W},$ by uniqueness of solutions to the SDEs, $(r^\lambda,\theta^\lambda)$ has the same law as $(\tilde{r}^\lambda,\tilde{\theta}^\lambda)$. Since $\theta^{\lambda-u}(1)=\tilde{\theta}(1)-u$, we have
$$ \begin{aligned}  \{(\lambda+u,r^\lambda):\lambda\in \eta Sch^\phi\}
&=\{\lambda,r^{\lambda-u}:\lambda\in\eta Sch^\phi+u\}
\\ &=\{(\lambda,\tilde{r}^\lambda):\tilde{\theta}^\lambda(1)\in 2\pi \mathbb{Z}+\phi+u\}
\\&=^d \{(\lambda,r^\lambda):\lambda\in \eta Sch^{\phi+u}\}. \end{aligned}
$$
    For $U$ uniform on $[0,2\pi]$, $U+\phi$ is also uniform on $[0,2\pi]$. This finishes the proof.
\end{proof}

\subsection{Proof of local joint scaling limit}

\begin{lemma}\label{lemma3.76}
    Given $\mathcal{B}$ a standard Brownian motion with $\mathcal{B}_0=0$ and $U$ uniform on $[0,1]$, define $f^u(t)=(u-|u-t|)$. We have for any $G\in \mathcal{C}_b(\mathbb{R}\times C[0,1]),$
    $$
\mathbb{E}\sum_{\lambda\in\eta Sch^*}G(\lambda,|q^\lambda|^2)=\frac{1}{2\pi}\int d\lambda\mathbb{E}\left[G\left(\lambda,\exp(\mathcal{B}_{v(\cdot)}+\frac{f^{v(U)}(v(\cdot))}{2})\right)\right],$$
\end{lemma}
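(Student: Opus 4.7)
The plan is to reduce the sum over $\eta Sch^*$ to a single $\lambda$-integral against the density $\phi^\lambda(1)$, then use Dambis--Dubins--Schwarz to recognize that $|q^\lambda|^2$ is a geometric Brownian motion in the time scale $v$, and finally apply a Cameron--Martin shift to convert the bias $\phi^\lambda(1)$ into the drift $f^{v(U)}(v(\cdot))/2$. First, applying Lemma \ref{lemma3.6!} with $U$ uniform on $[0,2\pi)$, the LHS equals $\tfrac{1}{2\pi}\int_0^{2\pi}d\phi\,\mathbb{E}\sum_{\lambda:\theta^\lambda(1)\in 2\pi\mathbb{Z}+\phi}G(\lambda,|q^\lambda|^2)$. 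Since $\phi^\lambda(1)=\partial_\lambda\theta^\lambda(1)$ is strictly positive (shown below), $\lambda\mapsto\theta^\lambda(1)$ is a monotone bijection $\mathbb{R}\to\mathbb{R}$. Unfolding the $\phi$-integral and the sum over $k\in\mathbb{Z}$ into a single integral over $\theta=\phi+2\pi k\in\mathbb{R}$, and then changing variables back to $\lambda$ via $d\theta=\phi^\lambda(1)\,d\lambda$, one gets
$$\mathbb{E}\sum_{\lambda\in\eta Sch^*}G(\lambda,|q^\lambda|^2)=\frac{1}{2\pi}\int_{\mathbb{R}}d\lambda\,\mathbb{E}\bigl[G(\lambda,|q^\lambda|^2)\,\phi^\lambda(1)\bigr].$$

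Next, I would derive an explicit formula for $\phi^\lambda(1)$. Set $Y(t)=|q^\lambda(t)|^2=e^{r^\lambda(t)}$; from Proposition \ref{whatisthesde?}, writing $v'(t)=\sigma^2\rho^2/[2(1-t)^{1-2\eta}]$ and $dM^r = \tfrac{\sigma\rho}{(1-t)^{1/2-\eta}}\operatorname{Re}[e^{-i\theta^\lambda}d\mathcal{W}]$, Itô's formula gives $dY=Y(v'(t)\,dt+dM^r)$, and combining with the SDE $d\phi^\lambda=dt-\phi^\lambda\,dM^r$ one checks that $d(\phi^\lambda Y)=Y\,dt$. Since $\phi^\lambda(0)=0$, this produces
$$\phi^\lambda(1)=\frac{1}{|q^\lambda(1)|^2}\int_0^1 |q^\lambda(s)|^2\,ds>0,$$
which also justifies the positivity used above. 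Moreover $r^\lambda$ has drift $\tfrac{1}{2}v'(t)\,dt$ and $[M^r]_t=v(t):=\frac{\sigma^2\rho^2}{4\eta}\bigl(1-(1-t)^{2\eta}\bigr)$, so Dambis--Dubins--Schwarz yields $M^r_t=\mathcal{B}_{v(t)}$ for a standard Brownian motion $\mathcal{B}$; as a process on $[0,1]$ the law of $r^\lambda$ coincides with that of $\mathcal{B}_{v(\cdot)}+v(\cdot)/2$, a law that does not depend on $\lambda$.

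Combining these ingredients and introducing an independent $U\sim\text{Uniform}[0,1]$,
$$\mathbb{E}\bigl[G(\lambda,|q^\lambda|^2)\,\phi^\lambda(1)\bigr]=\mathbb{E}\Bigl[G\bigl(\lambda,\exp(\mathcal{B}_{v(\cdot)}+v(\cdot)/2)\bigr)\,e^{\mathcal{B}_{v(U)}-\mathcal{B}_{v(1)}-\tfrac{1}{2}(v(1)-v(U))}\Bigr].$$
The weight is the Cameron--Martin density associated with the deterministic drift $h(r)=-1_{[v(U),v(1)]}(r)$; under the shifted measure, $\tilde{\mathcal{B}}_r:=\mathcal{B}_r+(r\wedge v(1)-v(U))_+$ is a standard Brownian motion and $\mathcal{B}_{v(s)}=\tilde{\mathcal{B}}_{v(s)}-(v(s)-v(U))_+$. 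The identity $v(s)-2(v(s)-v(U))_+=v(U)-|v(s)-v(U)|=f^{v(U)}(v(s))$ then gives
$$\mathcal{B}_{v(s)}+\tfrac{v(s)}{2}=\tilde{\mathcal{B}}_{v(s)}+\tfrac{1}{2}f^{v(U)}(v(s)),$$
and renaming $\tilde{\mathcal{B}}$ back to $\mathcal{B}$ produces the claimed formula. The main obstacle I anticipate is the Cameron--Martin bookkeeping: one must pick the shift supported on $[v(U),v(1)]$ rather than on $[0,v(U)]$, and then spot the cancellation that turns $v(\cdot)/2-(v(\cdot)-v(U))_+$ into exactly the tent $f^{v(U)}(v(\cdot))/2$; the positivity/monotonicity in Step 1 and the DDS time change in Step 2 are by comparison routine.
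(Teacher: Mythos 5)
Your proposal is correct and follows essentially the same route as the paper: unfold the sum over $\eta Sch^*$ into a $\lambda$-integral weighted by $\phi^\lambda(1)=\partial_\lambda\theta^\lambda(1)$, identify $\phi^\lambda(1)=\int_0^1 e^{r^\lambda(u)-r^\lambda(1)}du$, pass to the time scale $v(t)$ so that $r^\lambda\stackrel{d}{=}\mathcal{B}_{v(\cdot)}+v(\cdot)/2$, and remove the weight by a Girsanov shift producing the tent drift $\tfrac12 f^{v(U)}(v(\cdot))$. The only differences are cosmetic: you replace the paper's co-area formula by a monotone change of variables (justified by your positivity formula for $\phi^\lambda(1)$), and you carry out explicitly the product-rule identity $d(\phi^\lambda e^{r^\lambda})=e^{r^\lambda}dt$ and the Cameron--Martin computation that the paper handles by citing Itô's formula and Lemma 3.6 of Rifkind--Virag (resp.\ Nakano, Lemma 2.3).
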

where the function $v:[0,1]\to\mathbb{R}_+$ is defined by  \begin{equation}\label{whatisvt?}v(t)=\frac{\sigma^2\rho^2}{4\eta}[1-(1-t)^{2\eta}].\end{equation}
\begin{proof}
    By definition, $\eta Sch^*=\{\lambda:\theta^\lambda(1)\in 2\pi\mathbb{Z}+U\}$ and $U$ uniform on $[0,2\pi]$. Take expectation over $U$,
    $$\begin{aligned}
\mathbb{E}\sum_{\lambda\in\eta Sch^*}G(\lambda,r^\lambda)&=\frac{1}{2\pi}\mathbb{E}\int_0^{2\pi} du \sum_{\lambda:\theta^\lambda(1)\in 2\pi\mathbb{Z}+u}G(\lambda,r^\lambda)\\&=\frac{1}{2\pi}\mathbb{E}\int_{-\infty}^\infty du \sum_{\lambda:\theta^\lambda(1)=u}G(\lambda,r^\lambda).\end{aligned}   $$

Now we know $\theta^\lambda(1)$ is a.s. analytic in $\lambda$ (This can be checked as follows: take a sequence $t_n\uparrow 1$, we first check $\theta^\lambda(t_n)$ is analytic in $\lambda$, which can be checked by differentiating the SDE solved by $\theta^\lambda(t_n)$ by $x:=\Re\lambda$ and $y:=\Im\lambda$, and check the Cauchy-Riemann equation $\partial_x \theta^\lambda(t_n)=i\partial_y \theta^\lambda(t_n)$ is satisfied because both of them are solutions to the same SDE. Then we can show that almost surely, $\theta^\lambda(t_n)$ converges locally uniformly in $\lambda\in\mathbb{C}$ to $\theta^\lambda(1)$ by computing the quadratic variation of $\frac{d\mathcal{W}}{(1-t)^{\frac{1}{2}-\eta}}$, and the latter limit is a.s. analytic by Montel's theorem.), and $r^\lambda$ is a.s. continuous in $\lambda$. By the co-area formula, 

\begin{equation}
    \frac{1}{2\pi}\mathbb{E}\int_{-\infty}^\infty du \sum_{\lambda:\theta^\lambda(1)=u}G(\lambda,r^\lambda)=\frac{1}{2\pi}\int_{-\infty}^\infty d\lambda \mathbb{E}[G(\lambda, r^\lambda)|\frac{\partial \theta^\lambda(1)}{\partial \lambda}|].
\end{equation}
Notice that $\phi^\lambda(t):=\frac{\partial\theta^\lambda(t)}{\partial\lambda}$ solves the SDE (see \eqref{4.3.})
$$d\phi^\lambda=dt-\frac{\sigma\rho}{(1-t)^{\frac{1}{2}-\eta}}\text{Re}\left[e^{-i\theta^\lambda(t)}d\mathcal{W}\right]\phi^\lambda.$$

Recall also that $r^\lambda$ solves the SDE 
$$
dr^\lambda=\frac{\sigma^2\rho^2}{4(1-t)^{1-2\eta}}dt+\frac{\sigma\rho}{(1-t)^{\frac{1}{2}-\eta}}\text{Re}[e^{-i\theta^\lambda(t)}d\mathcal{W}],\quad r^\lambda(0)=0.$$
Since $e^{-i\theta^\lambda}d\mathcal{W}=^d d\mathcal{W}$, the distribution of $(r^\lambda,\phi^\lambda)$ is $\lambda$-independent. We may temporarily write $d\mathcal{B}=\sqrt{2}\text{Re}[e^{-i\theta^\lambda(t)}d\mathcal{W}]$ where $\mathcal{B}$ is some two-sided Brownian motion, and solve (omitting $\lambda$ in the expression),
$$\begin{aligned}
d\phi(t)=&dt-\phi dr +\frac{\sigma^2\rho^2}{4(1-t)^{1-2\eta}}\phi dt
.\end{aligned}$$
 By Itô's formula, the solution is 
 $$\phi(t)=\int_0^t du e^{r(u)-r(t)}.$$
From Fubini's theorem, 
$$\mathbb{E}\left[G(\lambda,r^\lambda)\mid \frac{\partial\theta^\lambda(1)}{\partial\lambda}|\right]=\int_0^1 du\mathbb{E}[e^{r(u)-r(1)}G(\lambda,r)].$$
Meanwhile, $r^\lambda$ solves
$$r^\lambda(t)=\int_0^t \frac{\sigma^2\rho^2}{4(1-s)^{1-2\eta}}ds +\int_0^t \frac{\sigma\rho}{(1-s)^{\frac{1}{2}-\eta}\sqrt{2}}d\mathcal{B}_s.$$

For simplicity, we introduce a time change \begin{equation}
    \label{timechange}
v(t):=\int_0^t \frac{\sigma^2\rho^2}{2(1-s)^{1-2\eta}}ds=\frac{\sigma^2\rho^2}{4\eta}[1-(1-t)^{2\eta}],\end{equation}
then $r^\lambda$ has the same law as
$$r^\lambda(v)=\frac{1}{2}v+\mathcal{Z}_v$$
for some standard two-sided Brownian motion $\mathcal{Z}$ with $\mathcal{Z}_0=0$.

Now let $\mathcal{R}$ denote the distribution of $r(v)$, $v\in[0,v(1)]$, on $\mathcal{C}[0,v(1)]$ and let $\mathcal{P}$ denote the law of $\mathcal{Z}_v$ on $\mathcal{C}[0,v(1)]$. We may characterize the law of 
$$\exp(\omega_v-\omega_1)d\mathcal{R}(\omega)$$
via Girsanov transform, as in Lemma 3.6 of \cite{rifkind2018eigenvectors}, see also \cite{nakano2022shape}, Lemma 2.3, to get that under the probability measure $\exp(\omega_u-\omega_1)d\mathcal{R}(\omega)$ on $\mathcal{C}[0,v(1)],$ a path $\omega$ has distribution $\mathcal{Z}+\frac{1}{2}f^u$ where $f^u(x)=(u-|u-x|)$ and $\mathcal{Z}$ is the standard Brownian motion. Now reworking the time change we have
$$\mathbb{E}[e^{(r(u)-r(1))}G(\lambda,r)]=\mathbb{E}[G(\lambda,\mathcal{B}_{v(\cdot)}+\frac{f^{v(u)}(v(\cdot))}{2}].$$
Taking the sum, 
$$\mathbb{E}\sum_{\lambda\in \eta Sch^*}G(\lambda,r^\lambda)=\frac{1}{2\pi}\int_{-\infty}^\infty d\lambda \int_0^1 du\mathbb{E}[G(\lambda, \mathcal{B}_{v(\cdot)}+\frac{f^{v(u)}(v(\cdot))}{2}].
$$
The proof finishes via the continuous mapping theorem applied to $f\mapsto \exp (f).$
\end{proof}

\begin{theorem}\label{newtheorem5.4}
    For fixed $0<|E|<2$, we set $\tau=\tau(E)=(\sigma\rho(E))^2$. Given $U$ uniform on $[0,2\pi]$, the point process defined on $\mathbb{R}\times \mathcal{M}([0,1])$ 
    $$ \{(n\rho(E)(\mu-E)+U,n|\psi^{\mu}(\lfloor nt\rfloor)|^2dt):\mu \text{ eigenvalue of } H_n\}$$
     converges in distribution to the point process $\mathcal{P}_E$ with intensity measure $\mu_E$. The intensity measure $\mu_E$ satisfies the following property: for any $F\in\mathcal{C}_b(\mathbb{R}\times \mathcal{M}([0,1]))$, 
$$\begin{aligned}&\int F(\lambda,\nu)d\mu_E(\lambda,\nu)\\&=\frac{1}{2\pi}\int d\lambda \mathbb{E} F\left(\lambda,\frac{\exp\left(\mathcal{Z}_{\frac{\sigma^2\rho^2}{4\eta}[(1-U)^{2\eta}-(1-t)^{2\eta}]}-\frac{\sigma^2\rho^2}{8\eta}\left|(1-U)^{2\eta}-(1-t)^{2\eta}\right|\right)dt}{\int_0^1 ds \exp\left(\mathcal{Z}_{\frac{\sigma^2\rho^2}{4\eta}[(1-U)^{2\eta}-(1-s)^{2\eta}]}-\frac{\sigma^2\rho^2}{8\eta}\left|(1-U)^{2\eta}-(1-s)^{2\eta}\right|\right)}\right),\end{aligned}$$
     
     where $\mathcal{Z}$ is a standard two-sided Brownian motion and $U$ is uniform on $[0,1]$.
\end{theorem}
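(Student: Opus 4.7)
The plan is to stitch together the three preparatory results of this section---Lemmas \ref{lema5.1}, \ref{lemma3.6!} and \ref{lemma3.76}---and finish with a shift of the Brownian motion to put the intensity formula into the advertised form.

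First I would identify the eigenvector measure with the transfer-matrix measure. For an eigenvalue $\mu=E+\lambda/(n\rho)$ the Dirichlet condition $\psi_0=0$ gives $(\psi_{\ell+1},\psi_\ell)^t=\psi_1\cdot M_\ell^\lambda(1,0)^t$, so $\psi^\mu_{\ell+1}\propto (M_\ell^\lambda)_{11}$; since $\psi^\mu$ is $\ell^2$-normalized, the probability measure $n|\psi^\mu(\lfloor nt\rfloor)|^2\,dt$ is exactly $\boldsymbol{m}_n^\lambda/\boldsymbol{m}_n^\lambda([0,1])$, with $\boldsymbol{m}_n^\lambda$ as in Lemma \ref{lema5.1} (the constant $2/\rho(E)$ is $t$-independent and cancels). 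Passing to a subsequence $n_j$ with $z^{n_j+1}\to z'$, Lemma \ref{lema5.1} then produces
\[
\{(\lambda,\boldsymbol{m}_{n_j}^\lambda):\lambda\in\Lambda_{n_j,E}\}\Rightarrow\{(\lambda,2\boldsymbol{q}^\lambda):\lambda\in\eta Sch^{2\arg z'}\},
\]
and applying the continuous mapping theorem to the normalization $\nu\mapsto\nu/\nu([0,1])$ (continuous on measures of positive total mass) together with the shift $\lambda\mapsto\lambda+U$ by an independent uniform $U\in[0,2\pi]$ yields
\[
\Bigl\{\bigl(n\rho(\mu-E)+U,\,n|\psi^\mu(\lfloor nt\rfloor)|^2dt\bigr)\Bigr\}\Rightarrow\left\{\left(\lambda+U,\;\frac{|q^\lambda(t)|^2dt}{\int_0^1|q^\lambda(s)|^2ds}\right):\lambda\in\eta Sch^{2\arg z'}\right\}.
\]
By Lemma \ref{lemma3.6!}, shifting $\lambda\in\eta Sch^{2\arg z'}$ by $U$ turns the first coordinates into $\eta Sch^*$ while preserving the second coordinate. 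Since the resulting limit no longer depends on the subsequence, the full sequence converges to a point process $\mathcal{P}_E$ whose intensity $\mu_E$ is that of $\{(\lambda,|q^\lambda|^2/\int|q^\lambda|^2):\lambda\in\eta Sch^*\}$.

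To compute $\mu_E$, I would feed $G(\lambda,h)=F\bigl(\lambda,\,h(\cdot)\,dt/\int_0^1 h(s)\,ds\bigr)$ into Lemma \ref{lemma3.76}, obtaining
\[
\int F\,d\mu_E=\frac{1}{2\pi}\int d\lambda\,\mathbb{E} F\!\left(\lambda,\,\frac{\exp(\mathcal{B}_{v(\cdot)}+f^{v(U)}(v(\cdot))/2)\,dt}{\int_0^1\exp(\mathcal{B}_{v(s)}+f^{v(U)}(v(s))/2)\,ds}\right).
\]
A Brownian time-shift finishes the job: let $\mathcal{Z}_s:=\mathcal{B}_{v(U)+s}-\mathcal{B}_{v(U)}$ for $s\in[-v(U),v(1)-v(U)]$ and extend to $s\in\mathbb{R}$ by an independent copy, producing a standard two-sided Brownian motion. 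The $t$-independent constants $\mathcal{B}_{v(U)}+v(U)/2$ cancel between numerator and denominator, and since $f^{v(U)}(v(t))/2-v(U)/2=-|v(t)-v(U)|/2$ with $v(t)-v(U)=\tfrac{\sigma^2\rho^2}{4\eta}[(1-U)^{2\eta}-(1-t)^{2\eta}]$, the ratio collapses onto the expression in the theorem.

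The main obstacle I anticipate is the continuous mapping step: the normalization $\nu\mapsto\nu/\nu([0,1])$ is singular where the total mass vanishes, so on the finite-volume side one needs a quantitative lower bound $\boldsymbol{m}_n^\lambda([0,1])\geq c>0$ uniform in the relevant microscopic window of $\lambda$, so that the map is exercised only on a high-probability event on which it is continuous. This is exactly what the lower estimate in \eqref{eigenpairups} of Theorem \ref{theorem3.1good} supplies; on the limit side positivity of $\int_0^1|q^\lambda|^2\,dt$ is automatic.
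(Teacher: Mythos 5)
Your proposal is correct and follows essentially the same route as the paper's proof: Lemma \ref{lema5.1} along a subsequence with $z^{n_j}$ convergent, the shift-invariance of Lemma \ref{lemma3.6!} to pass to $\eta Sch^*$, continuity of $\nu\mapsto\nu/\nu([0,1])$ away from zero mass (backed by the bounds of Theorem \ref{theorem3.1good}), and finally Lemma \ref{lemma3.76} with the recentering $\mathcal{Z}_s=\mathcal{B}_{v(U)+s}-\mathcal{B}_{v(U)}$ so the $t$-independent constants cancel and $v(t)-v(U)=\tfrac{\sigma^2\rho^2}{4\eta}[(1-U)^{2\eta}-(1-t)^{2\eta}]$ yields the stated intensity. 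The only difference is cosmetic: you make explicit the uniform positive lower bound on $\boldsymbol{m}_n^\lambda([0,1])$ and the two-sided extension of the shifted Brownian motion, which the paper leaves implicit.
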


\begin{proof}
    Along a subsequence $n^j$ with $z^{n_j}$ converging to $z'$, we obtain by Lemma \ref{lema5.1}
    $$
\begin{aligned} \{(\lambda+U,\boldsymbol{m}_n^\lambda):\lambda\in \Lambda_{n_j,E}\}&\Rightarrow \{(\lambda+U,2\boldsymbol{q}^\lambda):\lambda\in \eta Sch^{2\arg z'}\\
&=^d \{(\lambda,2\boldsymbol{q}):\lambda\in \eta Sch^*\}.
 \end{aligned} $$ For each subsequence we can find a sub-subsequence along which $z^{n_j}$ converges, so
 $$\{(\lambda+U,\boldsymbol{m}_n^\lambda):\lambda\in \Lambda_{n,E}\}\Rightarrow \{(\lambda,2\boldsymbol{q}):\lambda\in \eta Sch^*\}.$$

Moreover, since we may characterize the eigenvalues via
$$n|\psi^\mu(\lfloor nt\rfloor)|^2dt =\frac{d\boldsymbol{m}_n^\lambda(t)}{\boldsymbol{m}_n^\lambda([0,1])},$$
 and the functional on $\mathcal{M}([0,1]):\mu\mapsto \mu/\mu([0,1])$ is continuous away from zero, we have

$$\begin{aligned} \{(n\rho(E)(\mu-E)+U,n|\psi^\mu(\lfloor nt\rfloor)|^2 dt):\mu \text{ an  eigenvalue of }H_n\}\\
\Rightarrow \{(\lambda,\frac{\boldsymbol{q}^\lambda}{\boldsymbol{q}^\lambda([0,1])}):\lambda\in\eta Sch^{*}\}.
\end{aligned}$$
Recall that $d\boldsymbol{q}^\lambda(t)=|q^\lambda(t)|^2dt$, now we use Lemma \ref{lemma3.76}, and notice that, as the additive constants will cancel in normalization,
$$
\frac{\exp(\mathcal{B}_{v(t)}+\frac{1}{2}v(u)-\frac{1}{2}|v(u)-v(t)|)}{
\int_0^1 ds \exp(\mathcal{B}_s+\frac{1}{2}v(u)-\frac{1}{2}|v(u)-v(s)|)}=^d \frac{\exp(\mathcal{Z}_{v(t)-v(u)}-|v(t)-v(u)|/2)}{\int_0^1 ds\exp(\mathcal{Z}_{v(s)-v(u)}-|v(s)-v(u)|/2)}$$
where $\mathcal{Z}$ is a two sided Brownian motion. This finishes the proof given the explicit expression of $v(t)$ given in \eqref{whatisvt?}.\end{proof}

\subsection{Proof of global joint scaling limit}

Now we are finally in the position to prove Theorem \ref{shapetheorem}, the characterization of shape of eigenvalues.

\begin{proof} We essentially follow the proof of \cite{rifkind2018eigenvectors}, Theorem 1.1 and only give a sketch to some main arguments. Let $\theta$ be a uniform distribution on $[0,2\pi]$ and let $\boldsymbol{\psi}_n^\mu\in\mathcal{M}([0,1])$ be a measure having a density $|\psi^\mu(\lfloor nt\rfloor)|^2$. By Theorem \ref{newtheorem5.4}, the point process
$$\mathcal{P}_{E,n}=\{(n,\rho(E)(\mu-E)+\theta,n\boldsymbol{\psi}_n^\mu):\mu\in\Lambda_n\}$$
has a limit $\mathcal{P}_E$. Fix $g_1=(1-|x|)1_{|x|\leq 1}$ and $g_2\in\mathcal{C}_b(\mathbb{R}\times\mathcal{M}([0,1])),$ consider
$$G_n(E):=\sum_{\mu\in\Lambda_n}g_1(n\rho(E)(\mu-E))g_2(\mu,\boldsymbol{\psi}_n^\mu).
$$ Then given $|E|<2$ it converges in law to $G(E)$ with
\begin{equation}
    \mathbb{E}G(E)=\frac{1}{2\pi}\mathbb{E}g_2\left(E,\frac{\exp\left(\mathcal{Z}_{\frac{\sigma^2\rho^2}{4\eta}[(1-U)^{2\eta}-(1-t)^{2\eta}]}-\frac{\sigma^2\rho^2}{8\eta}\left|(1-U)^{2\eta}-(1-t)^{2\eta}\right|\right)dt}{\int_0^1 ds \exp\left(\mathcal{Z}_{\frac{\sigma^2\rho^2}{4\eta}[(1-U)^{2\eta}-(1-s)^{2\eta}]}-\frac{\sigma^2\rho^2}{8\eta}\left|(1-U)^{2\eta}-(1-s)^{2\eta}\right|\right)}\right).    
\end{equation}
The proof follows if we can show $$\int\mathbb{E}G_n(E)\rho(E)dE\to\int\mathbb{E}G(E)\rho(E)dE.$$ 

Denote by $$N_n(E)=|\{\mu\in\Lambda_n:|\mu-E|\leq 1/(n\rho(E))\}|,$$
so that $G_n(E)\leq \|g_1\|_\infty \|g_2\|_\infty N_n(E).$

By the tightness estimate in Theorem \ref{theorem3.1good}, $G_n(E)1_{|E|\leq 2-\epsilon}$ is uniformly integrable, so\begin{equation}
    \lim_{n\to\infty}\int \mathbb{E}[G_n(E)1_{|E|\leq 2-\epsilon}]\rho(E)dE=\int\mathbb{E}[G(E)1_{|E|\leq 2-\epsilon}]\rho(E)dE.
\end{equation}
The left hand side is equal to, via Fubini,
$$\mathbb{E}\sum_{\mu\in\Lambda_n}g_2(\mu,\boldsymbol{\psi}_n^\mu)\int_{-2+\epsilon}^{2-\epsilon}g_1(n\rho(E)(\mu-E))\rho(E)dE.$$ Now set some $\delta>\epsilon$ and consider $A_n(\delta)=\{\mu\in \Lambda_n:|\mu|<2-\delta\}$ and $B_n(\delta)=\{\mu\in\Lambda_n:|\mu|\geq 2-\delta\}$. The summation over $B_n(\delta)$ is bounded thanks to Lemma 7.4 of \cite{nakano2022shape}:
$$\lim_{n\to\infty}\frac{1}{n}\mathbb{E}|B_n(\delta)|=\frac{1}{\pi}\int_{2-\delta}^2 \rho(s)ds\leq C\sqrt{\delta},$$ and that for $n$ large and $\mu\in A_n(\delta)$,
$$\int_{-2+\epsilon}^{2-\epsilon}g_1(n\rho(x)(x-\mu))\rho(x)dx=\frac{1}{n}\int g_1(x)dx+o(1/n)=\frac{1}{n}+o(1/n),
$$ see \cite{nakano2022shape}, Section 4 for the proof of this claim. Finally, we have
$$\int\mathbb{E}[G(E)1_{|E|<2-\epsilon}]\rho(E)dE=\int \mathbb{E}[G(E)]\rho(E)dE+O(\epsilon).$$
Combining everything, the proof of the theorem follows.
\end{proof}

\section{Properties of the point process}\label{section6}
In this section we give the proof of Propositions \ref{prop1.5}, \ref{prop1.6}, \ref{prop1.7}, characterizing the properties of the $\eta Sch$ point process we just introduced. These properties are similar to those of the $Sch_\tau$ process discussed in \cite{kritchevski2011scaling}.

First we prove Proposition \ref{prop1.5}, repulsion of nearby eigenvalues.

\begin{proof}
In the proof of Lemma \ref{lemma3.6!} we have illustrated that $\theta$ has the following invariance property: for every $u\in\mathbb{R}$, 
$$\theta^{\lambda-u}(t)+ut=^d \theta^\lambda(t).$$
Thus for \eqref{largegap3.15} we only need to consider the case $\mu=0$.

Define the relative phase function $\alpha^\lambda(t)=\theta^\lambda(t)-\theta^0(t)$, then $\alpha^\lambda(t)$ solves the SDE
\begin{equation}\label{6.1good}
    d\alpha^\lambda(t)=\lambda dt+\frac{\sigma\rho}{(1-t)^{\frac{1}{2}-\eta}}\text{Im}[(e^{-i\alpha^\lambda(t)}-1)d\mathcal{Z}],\quad \alpha^\lambda(0)=0,
\end{equation}
where $\mathcal{Z}$ satisfies $d\mathcal{Z}=e^{-i\theta^0}d\mathcal{W}$ and $\mathcal{W}$ is another complex Brownian motion.
We shall rewrite it as
\begin{equation}\label{eq29}
    d\alpha^\lambda=\lambda dt+\sqrt{2}\frac{\sigma\rho}{(1-t)^{\frac{1}{2}-\eta}}\sin(\alpha^\lambda/2)d\mathcal{B}^\lambda,\quad \alpha^\lambda(0)=0,
\end{equation}
with $\mathcal{B}^\lambda$ satisfying $d\mathcal{B}^\lambda=-\sqrt{2}\text{Re}[e^{-i\alpha^\lambda/2}d\mathcal{Z}]$ is a standard Brownian motion.

The relative phase $\alpha^\lambda(t)$ provides a very nice way to estimate the number of points of $\eta Sch.$ This follows from the inequality
\begin{equation}
    \left|\frac{1}{2\pi}\alpha^\lambda(1)-\eta Sch[0,\lambda]\right|\leq 1.
\end{equation}

Now we fix $\epsilon>0$ and define a process $Y=\log(\tan(\alpha/4))$. From Itô's formula $Y$ solves the SDE
\begin{equation}\label{sdeforY}
    dY=\frac{\epsilon}{2}\cosh(Y)dt+\frac{1}{4}\frac{\sigma^2\rho^2}{(1-t)^{1-2\eta}}\tanh(Y)dt+\frac{1}{\sqrt{2}}\frac{\sigma\rho}{(1-t)^{\frac{1}{2}-\eta
    }}dB,
\end{equation}
with initial value $Y(0)=-\infty.$  By definition, $\{\alpha^\epsilon(1)\geq 2\pi\}=\{Y\text{ explodes on }[0,1]\}$. Now define the solution $\tilde{Y}$ that solves the same SDE \eqref{sdeforY}, but with $\tilde{Y}(0)=0.$ Then 
$$Y\text{ explodes on }[0,1]\Rightarrow \tilde{Y} \text{ explodes on }[0,1]\Rightarrow \sup_{t\in[0,1]}|\tilde{Y}(t)|\geq \log (1/\epsilon).$$
Given $\epsilon\leq 1$, $|y|\leq\log(1/\epsilon)$ leads to
$$|\epsilon\cosh(y)/2|\leq 1,\quad |\tanh(y)/4|\leq 1.$$

From this we get, for any $s\geq 0$,
$$\sup_{t\in[0,s]}|\tilde{Y}(t)|\leq \log(1/\epsilon)\Rightarrow \sup_{t\in[0,s]}|\tilde{Y}(t)-
\frac{1}{\sqrt{2}}Z(t)|\leq d(s),$$
where we define $Z(t)=\int_0^t \frac{\sigma\rho}{(1-s)^{\frac{1}{2}-\eta}}dB_s$ and $d(s)=s+\int_0^s \frac{\sigma^2\rho^2}{(1-t)^{1-2\eta}}dt,$ so that, denoting $T$ the first time $\tilde{Y}$ reaches $\log(1/\epsilon)$, we must have $\frac{1}{\sqrt{2}}|Z(T)|\geq\log(1/\epsilon)-d(T),$ which implies
$$\begin{aligned}
\mathbb{P}(\sup_{t\in[0,1]}|\tilde{Y}(t)|\geq\log(1/\epsilon))&\leq\mathbb{P}(\frac{1}{\sqrt{2}
}\sup_{t\in[0,1]}|Z(t)|\geq\log(1/\epsilon)-d(1))\\&\leq \exp(-C_\eta(\log(1/\epsilon)-d(1))^2).\end{aligned} 
$$ Since $\eta\in(0,\frac{1}{2}]$, we know that $d(1)<\infty.$
In the last inequality, we use that $\sup_{t\in[0,1]}|Z(t)|$ has sub-exponential tails. This follows from representing $Z(t)$ as a time-changed Brownian motion via Dambis-Dubins-Schwarz Theorem: $Z(t)\sim_{law}\mathcal{B}_{a(t)}$ for another Brownian motion $\mathcal{B}$ and $a(t)=\sigma\rho\sqrt{\int_0^t\frac{ds}{(1-s)^{1-2\eta}}}$, and then use the reflection principle of Brownian motion and Brownian scaling, together with the fact that $a(1)=O(1)$.
\end{proof}

Then we prove Proposition \ref{prop1.6}, the chance of a very long gap in the spectrum.
\begin{proof}
    Let $\alpha=\alpha^\lambda$ be defined in \eqref{6.1good}. The desired probability can be bounded via
    \begin{equation}
        \mathbb{P}(\theta^0(1)\in(0,\epsilon)\text{ mod }2\pi,\text{ and }\alpha(1)\leq 2\pi-\epsilon)\leq\mathbb{P}(\eta Sch[0,\lambda]=0)\leq\mathbb{P}(\alpha(1)\leq 2\pi).
    \end{equation}

Recall \eqref{thetalambda} that $$\theta^0(1)=\int_0^1 \frac{\sigma\rho}{(1-t)^{\frac{1}{2}-\eta}}d\mathcal{B}-\int_0^1 \frac{\sigma\rho}{(1-t)^{\frac{1}{2}-\eta}}\text{Im}[d\mathcal{Z}]$$ and $\mathcal{Z}$ satisfying $d\mathcal{Z}_t=e^{-i\theta^0(t)}d\mathcal{W}_t$ is thus independent of $\mathcal{B}$. Therefore by independence, 
$$\mathbb{E}[1\{\theta^0(1)\in(0,\epsilon)\text{ mod }2\pi\}\mid\mathcal{Z}]\geq\epsilon\min_x f_{2\pi}(x)=c\epsilon>0,$$ uniform in $\lambda$, where $f_{2\pi}(x)$ is the density of $\int_0^1 \frac{\sigma\rho}{(1-t)^{\frac{1}{2}-\epsilon}}d\mathcal{B}$ mod $2\pi$. Thus we can lower bound the left hand side by $c\epsilon \mathbb{P}(\alpha(1)\leq 2\pi-\epsilon),$ where $c$ depends implicitly on $\eta,\sigma,\rho$, 

The rest of the argument is about estimating the asymptotic of $\mathbb{P}(\alpha^\lambda(1)\leq 2\pi)$ as $\lambda\to\infty$. Recall that $\alpha^\lambda$ solves \begin{equation}
    d\alpha^\lambda=\lambda dt+\sqrt{2}\frac{\sigma\rho}{(1-t)^{\frac{1}{2}-\eta}}\sin(\alpha^\lambda/2)d\mathcal{B}^\lambda,\quad \alpha^\lambda(0)=0.
\end{equation} 

To make better use of existing results, we use the time change as in equation \eqref{timechange}. Then $\alpha^\lambda$ solves  
\begin{equation}
    d\alpha^\lambda=\frac{2(1-\frac{4\eta v}{\sigma^2\rho^2})^{\frac{1-2\eta}{2\eta}}}{\sigma^2\rho^2}\lambda dv+2\sin(\frac{\alpha^\lambda}{2})d\mathcal{B}^\lambda_v,\quad \alpha^\lambda(0)=0,
\end{equation} for $v\in[0,v(1)]$. 

By adapting the proof of \cite{valko2009continuum}, Theorem 13 to our setting, where they considered the SDE $$d\tilde{\alpha}=\lambda fdt+2\sin(\tilde{\alpha}/2)dB,\quad\tilde{\alpha}(0)=0$$
and obtained a two-sided estimate $\exp(-\lambda^2\|f\|_2^2/8+o(1)),$
we see that we may find some $c_\eta>0$ such that 
$$\mathbb{P}(\alpha^\lambda(1)\leq 2\pi)=\exp(-c_\eta\frac{\lambda^2}{{4\sigma^2\rho^2}}(1+o(1))).$$
To check the last expression, in our setting we need to compute 
$$\|f\|_2^2=
\int_0^\frac{\sigma^2\rho^2}{4\eta}\frac{4(1-\frac{4\eta v}{\sigma^2\rho^2})^\frac{1-2\eta}{\eta}}{\sigma^4\rho^4}dv=c_\eta\frac{1}{\sigma^2\rho^2}
$$ for some $c_\eta>0$. This finishes the proof. A careful check of the time change shows that the constant $c_\eta$ depends only on $\eta>0$ but not $\sigma$ and $\rho$. When we consider random Schrödinger operators with more general decay speed profile, the resulting constant (in place of $c_\eta$) still depends only on the decaying profile and not $\sigma,\rho$, but the expression $c_\eta$ could be rather complicated.  

\end{proof}

Finally we prove Proposition \ref{prop1.7}, on the fluctuation of number of points in the $\eta Sch$ point process.

\begin{proof}
Recall the SDE solved by $\alpha^\lambda$ in \eqref{6.1good}. Since $\int_0^1 \frac{\sigma^2\rho^2}{(1-t)^{1-2\eta}}dt<\infty,$ we deduce that $\alpha^\lambda(1)-\lambda$ is a Gaussian with finite variance. Then the claim follows from, with $\lambda=2\pi k+\theta$, 
$$ \eta Sch[0,2\pi k+\theta]-k=\#\{[\theta^0(1),\theta^\lambda(1)]\cap 2\pi\mathbb{Z}\}-k $$
and an application of Markov's inequality.
\end{proof}

When the potential decays much faster in the setting of Corollary \ref{reallyfast}, the proof is a straightforward modification of the previous proof:

\begin{proof}[\proofname\ of Corollary \ref{reallyfast}]The proof architecture in all the previous arguments follow without change, and one can check via computing the quadratic variation that the point process $\Lambda_{n,E}-\arg(z^{2n+2})-\pi$ converges in distribution to the $\sigma=0$ version of the point process $\eta Sch$, but this $\sigma=0$ version is previously given by $d\theta^\lambda(t)=\lambda dt$, so that $\theta^\lambda(t)=\lambda t$ and thus $\eta Sch^\phi=\{\lambda:\lambda\in 2\pi\mathbb{Z}\}$. Thus $\Lambda{n,E}-z^{2n+2}$ converges in distribution to $2\pi\mathbb{Z}$. The other arguments are exactly the $\sigma=0$ version of previous results and thus omitted.
    
\end{proof}

\section*{Conflict of interest statement}
The author does not declare any conflict of interest with other group of researchers, related to the content and completion procedure of this manuscript.

\section*{Data Availability Statement}
No dataset is generated during the completion of this manuscript. The cited references contain the main supporting materials for the scientific ground of this manuscript.

\bibliographystyle{amsplain}

\begin{thebibliography}{99}



\bibitem{kritchevski2011scaling}Kritchevski, E., Valkó, B. \& Virág, B. The scaling limit of the critical one-dimensional random Schrodinger operator. {\em Communications In Mathematical Physics}. \textbf{314} pp. 775-806 (2011)
\bibitem{rifkind2018eigenvectors}Rifkind, B. \& Virag, B. Eigenvectors of the 1-dimensional critical random Schrödinger operator. {\em Geometric And Functional Analysis}. \textbf{28} pp. 1394-1419 (2018)
\bibitem{nakano2022shape}Nakano, F. Shape of eigenvectors for the decaying potential model. {\em Annales Henri Poincaré}. pp. 1-23 (2022)
\bibitem{valko2009continuum}Valkó, B. \& Virág, B. Continuum limits of random matrices and the Brownian carousel. {\em Inventiones Mathematicae}. \textbf{177} pp. 463-508 (2009)
\bibitem{killip2007eigenfunction}Killip, R. \& Nakano, F. Eigenfunction statistics in the localized Anderson model. {\em Annales Henri Poincaré}. \textbf{8} pp. 27-36 (2007)
\bibitem{kotani2017poisson}Kotani, S. \& Nakano, F. Poisson statistics for 1d Schrödinger operators with random decaying potentials. {\em Electronic Journal Of Probability}. (2017)
\bibitem{flores2023one}Flores, G. \& Taarabt, A. One-dimensional Discrete Dirac Operators in a Decaying Random Potential II: Clock, Schrödinger and Sine statistics. {\em ArXiv Preprint ArXiv:2301.13107}. (2023)
\bibitem{killip2009eigenvalue}Killip, R. \& Stoiciu, M. Eigenvalue statistics for CMV matrices: from Poisson to clock via random matrix ensembles. {\em Duke Mathematical Journal}. \textbf{146(3)} pp. 361-399 (2009)
\bibitem{last2008fine}Last, Y. \& Simon, B. Fine structure of the zeros of orthogonal polynomials IV: a priori bounds and clock behavior. {\em Communications On Pure And Applied Mathematics: A Journal Issued By The Courant Institute Of Mathematical Sciences}. \textbf{61}, 486-538 (2008)
\bibitem{kirsch2007invitation}Kirsch, W. An invitation to random Schrödinger operators. {\em ArXiv Preprint ArXiv:0709.3707}. (2007)




\bibitem{kotani1988one}Kotani, S. \& Ushiroya, N. One-dimensional Schrödinger operators with random decaying potentials. {\em Communications In Mathematical Physics}. \textbf{115} pp. 247-266 (1988)
\bibitem{kiselev1998modified}Kiselev, A., Last, Y. \& Simon, B. Modified Prüfer and EFGP transforms and the spectral analysis of one-dimensional Schrödinger operators. {\em Communications In Mathematical Physics}. \textbf{194}, 1-45 (1998)
\bibitem{minami1996local}Minami, N. Local fluctuation of the spectrum of a multidimensional Anderson tight binding model. {\em Communications In Mathematical Physics}. \textbf{177} pp. 709-725 (1996)
\bibitem{germinet2014spectral}Germinet, F. \& Klopp, F. Spectral statistics for random Schrödinger operators in the localized regime. {\em Journal Of The European Mathematical Society}. \textbf{16}, 1967-2031 (2014)
\bibitem{minami2007theory}Minami, N. Theory of point processes and some basic notions in energy level statistics. {\em Probability And Mathematical Physics}. \textbf{42} pp. 353-398 (2007)
\bibitem{nakano2014level}Nakano, F. Level statistics for one-dimensional Schrödinger operators and Gaussian beta ensemble. {\em Journal Of Statistical Physics}. \textbf{156} pp. 66-93 (2014)
\bibitem{MR3379349}Kotani, S. \& Nakano, F. Level statistics of one-dimensional Schrödinger operators with random decaying potential. {\em Festschrift Masatoshi Fukushima}. \textbf{17} pp. 343-373 (2015)

\bibitem{han2023universal}Han, Y. Universal edge scaling limit of discrete 1d random Schröodinger operator with vanishing potentials. {\em ArXiv Preprint ArXiv:2306.17001}. (2023)




\end{thebibliography}

\end{document}